\makeatletter \@addtoreset{equation}{section} \makeatother
\renewcommand\thetable{\thesection.\@arabic\c@table}
\theoremstyle{plain}
\newtheorem{maintheorem}{Theorem}
\newtheorem{theorem}{Theorem }[section]
\newtheorem{proposition}[theorem]{Proposition}
\newtheorem{lemma}[theorem]{Lemma}
\newtheorem{corollary}[theorem]{Corollary}
\theoremstyle{definition} \theoremstyle{remark}
\newtheorem{remark}[theorem]{Remark}
\newtheorem{definition}[theorem]{Definition}
\DeclareMathAlphabet{\mathpzc}{OT1}{pzc}{m}{it}
\newcommand{\SL}{\text{SL}}
\renewcommand{\epsilon}{\varepsilon}
\begin{document}
\large

\title[A dichotomy in area-preserving reversible maps]{A dichotomy in area-preserving reversible maps}

\author[M. Bessa]{M\'ario Bessa}
\address{Departamento de Matem\'atica, Universidade da Beira Interior, 
Rua Marqu\^es d'\'Avila e Bolama, 
6201-001 Covilh\~a, Portugal.}
\email{bessa@ubi.pt}

\author[A. Rodrigues]{Alexandre Rodrigues}
\address{Departamento de Matem\'atica, Universidade do Porto, 
Rua do Campo Alegre, 687, 
4169-007 Porto, Portugal}
\email{alexandre.rodrigues@fc.up.pt}

\date{\today}

\maketitle

\begin{abstract}
In this paper we study $R$-reversible area-preserving maps $f\colon M\rightarrow M$ on a two-dimensional Riemannian closed manifold $M$, i.e. diffeomorphisms $f$ such that $R\circ f=f^{-1}\circ R$ where $R\colon M\rightarrow M$ is an isometric involution. We obtain a $C^1$-residual subset where any map inside it is Anosov or else has a dense set of elliptic periodic orbits. As a consequence we obtain the proof of the stability conjecture for this class of maps. Along the paper we also derive the $C^1$-closing lemma for reversible maps and other perturbation toolboxes. 
\end{abstract}

\vspace{0.5cm}

{\tiny\noindent\emph{MSC 2000:} primary 37D20; 37C20; secondary 37C27, 34D30.\\
\emph{Keywords:} Reversing symmetry, area-preserving map, closing lemma, elliptic point.\\}

\vspace{0.5cm}

\section{Introduction}\label{intro}

\subsection{Symplectic and reversing symmetry invariants} 
Symmetries, like equivariance and reversibility, play an important role in determining the behavior of a dynamical system. 
Equivariant symmetries have been studied extensively in connection with bifurcation theory; see \cite{FMN, Golubitsky II}. 
The general theory for reversible symmetries in dynamical systems had a more recent development as described in the  thorough surveys \cite{LR, RQ}. Indeed, symmetries are geometric invariants which play an important role through several applications in Physics, from the Classical \cite{Birkhoff} and Quantum Mechanics \cite{Prigogine} to Thermodynamics \cite{Kumicak}. However, there is still a gap in the literature concerning the area-preserving reversible systems and the question about the validity of classic results in the this setting arises.

The main object of the present paper is twofold: we intend to study dynamical systems which keep invariant a reversing symmetry and a symplectic form at the same time. As reported in \cite{LR}, the interconnection between these two types of geometrical invariants is much common since there is a large class of Hamiltonians which come equipped with a reversing symmetry. A paradigmatic example relies on the Chirikov-Taylor standard map \cite{Meiss}. In the present article,  after a scrupulous examination of properly chosen examples, we would like to systematize their features and consider the general family of area-preserving maps which display a reversing symmetry. The area-preserving maps appear naturally when considering time-one maps of a Hamiltonian with $2$ degrees of freedom which is the base of Celestial and Classical Mechanics. In what follows, we characterize the maps that we are going to consider along the paper.

Let $M$ denote a compact, connected, boundaryless, $C^\infty$ Riemannian two-di\-men\-sio\-nal manifold, let $\omega$ be a smooth symplectic form on $M$ and $\mu$ the measure associated to $\omega$ that we call \emph{Lebesgue measure} or \emph{area}. Setting by $ \text{Diff}^{1}(M)$ the set of $C^1$ diffeomorphisms on $M$, let $\text{Diff}^{~1}_{\mu}(M)\subset \text{Diff}^{1}(M)$ stand for the set of $C^1$ diffeomorphisms on $M$ such that if $f \in \text{Diff}^{1}_\mu(M)$, then for every borelian subset $A$ of $M$, the following property holds: $\mu(f^{-1}(A))=\mu(A)$. These maps are often called \emph{area-preserving} diffeomorphisms on $M$. A diffeomorphism $f:M \rightarrow M$ is Anosov if $M$ is a uniformly hyperbolic set for $f$, \emph{i.e.} there are $m \in \mathbb{N}$ and $\sigma\in(0,1)$ such that, for every $x \in M$, there is a $Df$-invariant continuous splitting $T_x M = E_x^u \oplus E_x^s$ such that $\|Df_x^m|_{E_x^s}\|\leq \sigma$ and $\|(Df_x^m)^{-1}|_{E_x^u}\|\leq \sigma$.

We are considering the $C^1$-Whitney topology where a property is said $C^1$-\emph{generic} if it holds on a $C^1$-residual set. A $C^1$-\emph{residual set} is a countable intersection of $C^1$-open and $C^1$-dense sets. Observe that both sets  $\text{Diff}^{1}(M)$ and $\text{Diff}^{~1}_{\mu}(M)$ endowed with the $C^1$-topology are Baire spaces \cite[Sect. 3.5]{Webster}: so every residual set is dense.

In this paper we address the existence of elliptic periodic orbits far from the Anosov maps within the space $\text{Diff}^{~1}_{\mu}(M)$ which exhibit some reversing symmetry. Roughly speaking, a reversing symmetry is a diffeomorphism $R:M \rightarrow M$ such that for all $x \in M$, $DR_x \in \SL(2, \mathbb{R})$ and $R\circ R = Id_M$. A diffeomorphism $f :M \rightarrow M$ is
called $R$-\emph{reversible} if there exists an isometry $R : M \rightarrow M$, which conjugates $f$ to its
inverse $f^{-1}$, \emph{i.e.}, such that the following equality holds:
\begin{equation}
\label{reversivel1}
R\circ f = f^{-1}\circ R.
\end{equation}

 The set of diffeomorphisms that are area-preserving and $R$-reversible will be denoted by  $\text{Diff}^{~1}_{\mu,R}(M)$. The fixed-point subspace of the involution $R$, defined as $Fix(R)=\{x\in M: R(x)=x\}$, is a central object of analysis. While fixed point subspaces of preserving symmetries are invariant under the dynamics, in general they are not setwise invariant for reversing symmetries. 
Throughout this article, this set is assumed to be a smooth one-dimensional submanifold of $M$. 

\medbreak

	\subsection{Elliptic closed points} 
Let $f \in \text{Diff}^{~1}(M)$. We say that $p\in M$ is an \emph{elliptic periodic orbit} of period $n \in \mathbb{N}$ for the diffeomorphism $f$ if the following conditions hold:
\begin{itemize}
\item $f^n(p)=p$ ($p\in Per(f)$ i.e. $p$ is periodic for $f$),
\item $f^i(p)\not=p$, for all $i=1,...,n-1$ and 
\item the map $Df^n_p \in \SL(2, \mathbb{R}) $ has non-real spectrum with modulus $1$.
\end{itemize}
\medbreak

 Starting with Birkhoff \cite{Birkhoff}, the dynamic structure of symplectic maps near an elliptic periodic orbit has been studied intensively in the context of Hamiltonian mechanics. Going back in to the seventies, in the generic theory of area-preserving diffeomorphisms, we find the seminal result of Newhouse  \cite{N} that says that $C^1$-generic diffeomorphisms in surfaces are Anosov or else the elliptical points are dense. The proof
is supported in the symplectic structure and on the finding of homoclinic tangencies associated to periodic orbits. The extension of the Newhouse dichotomy for the reversible context is not so straightforward as we may expect due to the technical difficulties that the reversibility entails: there is no guarantee that the construction of the $R$-reversible and conservative perturbation, through homoclinic tangencies, may be done in a reversible way. This is why our proof does not run along the same lines to that of \cite{N} but rather to those of \cite{BesDu07, BCR}.
\medbreak

The present article gives a proof of  Newhouse's dichotomy in the reversible and area-preserving setting; we also obtain the abundance of elliptic closed orbits $C^1$-far from the Anosov maps and the proof of the stability conjecture for this class. A consequence of the latter result is that the only symplectic reversible diffeomorphisms on a compact manifold which are structurally stable (among symplectic reversible diffeomorphisms) are the Anosov diffeomorphisms. Anosov diffeomorphisms within the class $\text{Diff}^{~1}_{\mu,R}(M)$ were studied with detail in ~\cite[\S4]{BCR}. In the dissipative context, using the concept of heteroclinic tangencies, the authors of \cite{LS} establish a new type of Newhouse domains, in which there is a dense set of diffeomorphisms having simultaneously sinks, sources and elliptic periodic orbits.

\subsection{Statement of the main results}

In order to extend the dichotomy of \cite{N} for area-preserving reversible maps, we first state and prove a new version of the $C^1$-Closing Lemma, one of  the classic results in dynamical systems. This is meant to refer to a bifurcation problem in which there is a recurrent (or non-wandering) non-closed orbit. Recall that a point $x\in M$ is \emph{non-wandering} for $f$ if for all neighbourhood $U$ of $x$, there exists $n \in \mathbb{N}$ such that $f^n(U) \cap U \neq \emptyset$. 
\medbreak

 By perturbing the original system, we obtain a $C^1$-near system that has a periodic orbit passing near the non-wandering point. Several papers deal with the classic $C^1$-Closing Lemma and improvements thereof. We refer the reader to the survey \cite{AZ} that consider numerous kinds of closing lemmas. Using similar arguments to those of \cite{PR} combined with \cite{BCR}, we state and prove the following version of the $C^1$-Closing Lemma:

\begin{maintheorem}[The Reversible $C^1$-Closing Lemma]
\label{Closing}
Let $R$ be an isometric involution on $M$. There exists a residual set $\mathscr{D}\subset \text{Diff}^{~1}_{\mu, R}(M)$ such that if $f \in \mathscr{D}$, then for Lebesgue almost every point $x\in M$, $r>0$ and $\epsilon>0$, there exists $g\in\mathscr{D}$  such that $g$ is $\epsilon$-$C^1$-close to $f$ and $y\in Per(g)$ for some $y\in B(x,r)$.
\end{maintheorem}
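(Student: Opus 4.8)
The plan is to reduce the statement to a single perturbation result — a reversible, conservative version of Pugh's closing lemma — by a soft Baire--category argument, and to prove that result by running Pugh's construction in the area-preserving form of \cite{PR} while carrying out every elementary perturbation $R$-equivariantly, by means of reversible perturbation lemmas of the type developed in \cite{BCR} (and adapted to the present area-preserving setting elsewhere in the paper). The bridge with ``Lebesgue almost every $x$'' is the Poincar\'e recurrence theorem: since $f\in\text{Diff}^{~1}_{\mu,R}(M)$ preserves the finite measure $\mu$, $\mu$-almost every $x\in M$ is recurrent, and recurrence of $x$ is precisely what the closing construction needs.

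So the core assertion to establish is the following one-shot closing lemma: \emph{if $f\in\text{Diff}^{~1}_{\mu,R}(M)$, $x\in M$ is recurrent for $f$, and $r,\epsilon>0$, then there is $g\in\text{Diff}^{~1}_{\mu,R}(M)$ with $\|g-f\|_{C^1}<\epsilon$ and $Per(g)\cap B(x,r)\neq\emptyset$.} To prove it, pick a long return time $T$ with $f^T(x)\in B(x,r')$ for a small radius $r'$ to be fixed in terms of $r$ and $\epsilon$, and put $\gamma=\{x,f(x),\dots,f^T(x)\}$. Pugh's mechanism realizes the closed orbit not by a single $C^0$-large push of $f^T(x)$ towards the fibre of $x$, but by spreading a family of elementary $C^1$-small perturbations over a controlled block of consecutive iterates inside $\gamma$; in the conservative refinement of \cite{PR} each elementary perturbation is the time-one map of a compactly supported autonomous Hamiltonian in a small flowbox, so that $\mu$ is preserved and the $C^1$-size is governed by the $C^2$-size of the Hamiltonian.

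The heart of the matter is to keep the output inside $\text{Diff}^{~1}_{\mu,R}(M)$. Two regimes occur along $\gamma$. If an elementary perturbation $h_D$ is supported in a small disk $D$ with $D\cap R(D)=\emptyset$, I pair $h_D$ with its reversible partner $R\circ h_D^{-1}\circ R$, which perturbs $f$ near $R(\gamma)$; a direct computation from $R\circ f=f^{-1}\circ R$ shows that the map obtained by inserting $h_D$ near $D$, inserting $R\circ h_D^{-1}\circ R$ near its mirror, and keeping $f$ elsewhere again satisfies \eqref{reversivel1}, preserves $\mu$, is $\epsilon$-$C^1$-close to $f$, and leaves the periodic orbit (which is produced within the first family of disks) unchanged. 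If instead $D$ meets $R(D)$ — which along $\gamma$ happens exactly when $f^i(x)=R(f^j(x))$ for some $i,j$, i.e.\ (using $R f^k=f^{-k}R$) when $f^{i+j}(x)=R(x)$, i.e.\ $x\in\bigcup_{m\ge0}Fix(R\circ f^{m})$ — one instead uses an $R$-invariant Hamiltonian localized near $Fix(R)$, whose existence with the required control is exactly what the reversible perturbation toolbox supplies. Since the set $\bigcup_{m\ge0}Fix(R\circ f^m)$ is Lebesgue-null for $f$ in a residual set (for $m=0$ it is the given one-dimensional submanifold $Fix(R)$), the separated regime already covers Lebesgue-almost every $x$, and the non-separated one is needed only to make the lemma unconditional. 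I expect the genuinely $R$-equivariant version of Pugh's combinatorial spreading near $Fix(R)$ — reconciling $C^1$-smallness, exact area-preservation, and $R$-symmetry at once — to be the main obstacle, and the place where the perturbation technology imported from \cite{BCR} does the real work.

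It remains to assemble $\mathscr{D}$. After one further arbitrarily small perturbation the orbit produced above may be taken hyperbolic, hence persistent under $C^1$-perturbation within $\text{Diff}^{~1}_{\mu,R}(M)$; therefore, for a countable basis $\{V_i\}_{i\in\NN}$ of $M$, the set $\mathscr{U}_i:=\operatorname{int}\{f\in\text{Diff}^{~1}_{\mu,R}(M):Per(f)\cap V_i\neq\emptyset\}$ is $C^1$-open, and by the one-shot lemma applied to a recurrent point of $V_i$ (there is one, as $\mu(V_i)>0$) it is $C^1$-dense. Taking $\mathscr{D}$ to be $\bigcap_i\mathscr{U}_i$ intersected with the residual conditions used above yields a residual subset of the Baire space $\text{Diff}^{~1}_{\mu,R}(M)$ on which $Per(f)$ is dense in $M$; in particular, for $f\in\mathscr{D}$, every $x$, and every $r,\epsilon>0$, one may take $g=f\in\mathscr{D}$ — or, if a nontrivial perturbation is desired, any $\epsilon$-$C^1$-close $g\in\mathscr{D}$ furnished by the one-shot lemma — and find $y\in Per(g)\cap B(x,r)$, which is the claim.
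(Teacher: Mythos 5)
Your proposal follows essentially the same route as the paper: run the Pugh--Robinson area-preserving closing construction along a recurrent (non-wandering) orbit segment, symmetrize each local perturbation by its mirror $R\circ h^{-1}\circ R$ supported near the $R$-image of the segment (the paper's ``twin perturbation'', via Lemma~\ref{local}), and restrict to the full-measure set of $(R,f)$-free orbits of Proposition~\ref{D} so that the two supports are disjoint, finishing with a Baire assembly over a countable basis using persistence of the created periodic orbit. The only cosmetic differences are that you phrase the reduction through Poincar\'e recurrence and a ``one-shot'' lemma rather than directly for non-wandering points, and that you explicitly flag the non-separated case near $\bigcup_m Fix(R\circ f^m)$ (which, like the paper, you then discard as Lebesgue-null); neither changes the substance of the argument.
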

\medbreak
Using some perturbation lemmas that will be developed in Section \ref{Perturbation}, the proof of Theorem \ref{Closing} will be addressed in Section \ref{closinglemma}. A direct corollary of the Reversible $C^1$-Closing Lemma together and the Poincar\'e Recurrence Theorem is the following:

\begin{corollary}
(\emph{General Density Theorem})\label{gdt}
There exists a residual subset $\mathscr{P}\subset\text{Diff}^{~1}_{\mu, R}(M)$ such that the closure of the set of hyperbolic or elliptic periodic points of any $f\in\mathscr{P}$ is the whole manifold $M$.
\end{corollary}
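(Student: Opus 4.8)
The plan is to derive the statement from Theorem~\ref{Closing} and the Poincar\'e Recurrence Theorem by the standard Baire category argument behind Pugh's General Density Theorem, transplanted to $\text{Diff}^{~1}_{\mu, R}(M)$. Fix a countable basis $\{B_n\}_{n\in\NN}$ for the topology of $M$ and, for each $n$, set
\[
\mathscr{P}_n \;=\; \mathrm{int}_{C^1}\big\{\, g\in\text{Diff}^{~1}_{\mu, R}(M)\;:\; g \text{ has a hyperbolic or elliptic periodic orbit meeting } B_n \,\big\},
\]
the interior being taken in $\text{Diff}^{~1}_{\mu, R}(M)$, and put $\mathscr{P}=\bigcap_{n}\mathscr{P}_n$. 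Each $\mathscr{P}_n$ is $C^1$-open by construction; once each is shown to be $C^1$-dense, $\mathscr{P}$ is residual because $\text{Diff}^{~1}_{\mu, R}(M)$ is a Baire space, and any $f\in\mathscr{P}$ has a hyperbolic or elliptic periodic point in every basic open set $B_n$, so the closure of the set of such points is $M$ --- which is the claim. The Poincar\'e Recurrence Theorem is what certifies that this target is the whole manifold: applied to $f$ with the finite invariant measure $\mu$, it shows that $\mu$-almost every point of $M$ is recurrent, so the non-wandering set of $f$ equals $M$, and it also lets one place the closing construction at a recurrent point inside any prescribed $B_n$.

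Everything therefore reduces to the density of $\mathscr{P}_n$. Let $h\in\text{Diff}^{~1}_{\mu, R}(M)$ and $\epsilon>0$. Since $\mathscr{D}$ is dense I would first choose $f\in\mathscr{D}$ that is $\epsilon/3$-$C^1$-close to $h$; then pick $x\in B_n$ which is recurrent for $f$ and lies in the full-measure set on which Theorem~\ref{Closing} applies (possible because $\mu(B_n)>0$ and both conditions hold off a $\mu$-null set), and fix $r>0$ with $B(x,r)\subset B_n$. Theorem~\ref{Closing} then yields $g\in\mathscr{D}$ that is $\epsilon/3$-$C^1$-close to $f$ and has a periodic point $y\in B(x,r)$, of some period $k$. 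Since $Dg^{k}_{y}\in\SL(2,\RR)$, either $y$ is already hyperbolic or elliptic for $g$, or it is parabolic ($\mathrm{tr}\,Dg^{k}_{y}=\pm2$); in the latter case one further arbitrarily $C^1$-small perturbation --- area-preserving, $R$-reversible, and supported in small neighbourhoods of the points of the orbit of $y$, of the reversible Franks-type kind developed in Section~\ref{Perturbation} --- produces $\tilde g\in\text{Diff}^{~1}_{\mu, R}(M)$ that is $\epsilon/3$-$C^1$-close to $g$, still has $y$ as a periodic point, but with $|\mathrm{tr}\,D\tilde g^{k}_{y}|\neq2$; otherwise put $\tilde g=g$. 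Either way $y$ is hyperbolic or elliptic for $\tilde g$, so $1\notin\mathrm{spec}(D\tilde g^{k}_{y})$, and the implicit function theorem provides a continuation of $y$ that depends continuously on the map, stays inside $B(x,r)\subset B_n$, and --- by continuity of the spectrum --- stays hyperbolic or elliptic. Hence a full $C^1$-neighbourhood of $\tilde g$ in $\text{Diff}^{~1}_{\mu, R}(M)$ lies in the set whose interior defines $\mathscr{P}_n$, so $\tilde g\in\mathscr{P}_n$ and $\tilde g$ is $\epsilon$-$C^1$-close to $h$, which is the required density.

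The only point that is not routine bookkeeping is the parabolicity-removing perturbation: it must preserve the area \emph{and} the reversibility relation \eqref{reversivel1}, and the constraint becomes genuine exactly when the orbit of $y$ meets $Fix(R)$, where the local perturbation cannot be chosen freely but must be $R$-equivariant. This is precisely the difficulty that the perturbation lemmas of Section~\ref{Perturbation} are built to handle; granting them, the rest is the standard packaging of Theorem~\ref{Closing} with Poincar\'e recurrence, which is why the statement is recorded as a corollary. (Alternatively, if the set $\mathscr{D}$ produced in the proof of Theorem~\ref{Closing} is arranged so that every $f\in\mathscr{D}$ has all its periodic orbits hyperbolic or elliptic, then this extra step is unnecessary and one may always take $\tilde g=g$.)
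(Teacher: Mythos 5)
Your argument is correct and is exactly the standard Pugh-style packaging that the paper intends: the paper gives no written proof of Corollary~\ref{gdt} beyond calling it a direct consequence of Theorem~\ref{Closing} and Poincar\'e recurrence, and the remark closing Section~\ref{closinglemma} ("this periodic point can be made hyperbolic or elliptic by a new perturbation, persisting under small perturbations") confirms that your extra Franks-type step to remove parabolicity, followed by persistence of the hyperbolic/elliptic continuation, is precisely the intended route. No substantive difference from the paper's approach.
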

\medbreak

Let $f \in \text{Diff}^{~1}_{\mu,R}(M)$. Extending the Bochi-Ma\~n\'e Theorem (\cite{Bo}), the authors of \cite{BCR} proved that there exists a $C^1$- residual subset  $\text{Diff}^{~1}_{\mu,R} (M)$, say $\mathcal{R}$, such that every $f\in \mathcal{R}$ either is Anosov or have zero Lyapunov exponents at Lebesgue almost every point. If $f$ is non-Anosov, the result says nothing with respect to the existence of elliptic periodic points. In this paper, we are able to prove that if $f$ is far from being Anosov, thus for any open set $O\subset M$ and $\epsilon>0$, there exists $g\in\text{Diff}^{~1}_{\mu, R}(M)$ such that $g$ is $\epsilon$-$C^1$-close to $f$ and $g$ has an elliptic periodic orbit through $O$. This is the content of the following reversible version of Newhouse's dichotomy~(\cite[Theorem 1.1]{N}):

\begin{proposition}
\label{newhouse}
If $f\in\text{Diff}^{~1}_{\mu, R}(M)$ be a map in the $C^1$-interior of the complement of the Anosov maps, then for any (non-empty) open set $O\subset M$ and $\epsilon>0$, there exists $g\in\text{Diff}^{~1}_{\mu, R}(M)$ such that $g$ is $\epsilon$-$C^1$-close to $f$ and $g$ has an elliptic periodic orbit through $O$.
\end{proposition}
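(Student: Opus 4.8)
I would adapt to the reversible setting the Bochi-Ma\~n\'e scheme used in \cite{BesDu07,BCR}. It proceeds in two moves: first, after a harmless perturbation, reduce to a map with no hyperbolicity; then close up a recurrent orbit passing through $O$ so that the resulting periodic orbit is elliptic, carrying out the perturbation inside $\text{Diff}^{~1}_{\mu, R}(M)$.

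\emph{Reduction to vanishing Lyapunov exponents.} Fix a non-empty open set $O\subset M$ and $\epsilon>0$. As the $C^1$-interior of the complement of the Anosov maps is $C^1$-open, a preliminary $(\epsilon/2)$-$C^1$-perturbation lets us assume, without leaving that interior, that $f$ belongs to the residual set $\mathcal{R}$ of \cite{BCR} and to the residual set $\mathscr{D}$ of Theorem \ref{Closing}; by the triangle inequality it then suffices to produce $g$ that is $(\epsilon/2)$-$C^1$-close to this $f$. Since $f$ is not Anosov, the defining property of $\mathcal{R}$ yields zero Lyapunov exponents at Lebesgue-almost every point. Because $\mu(O)>0$, we may fix $x\in O\setminus Fix(R)$ with zero upper Lyapunov exponent and at which the conclusion of Theorem \ref{Closing} holds; by the Poincar\'e recurrence theorem $x$ may moreover be taken recurrent.

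\emph{From a recurrent orbit to an elliptic periodic one.} For large $N$ the returning segment $x,f(x),\dots,f^{N-1}(x)$ is a long piece along which the cocycle of $f$ grows subexponentially, $\tfrac1N\log\|Df^N_x\|\to0$; by Bochi's interpolation argument \cite{Bo} the return matrix $Df^N_x$ can then be pushed onto the elliptic locus of $\SL(2,\mathbb{R})$ by composing into it $N$ rotations of individual $C^1$-size $o(1)$. Combining Theorem \ref{Closing} with the $R$-reversible perturbation lemmas of Section \ref{Perturbation} (the reversible counterparts of the Franks and Ma\~n\'e lemmas), one realises this: there is $g\in\text{Diff}^{~1}_{\mu, R}(M)$, $(\epsilon/2)$-$C^1$-close to $f$, for which the returning segment of $x$ has been closed into a periodic orbit $\cO(p)$ of some period $n$ with $p\in O$ and with $Dg^n_p\in\SL(2,\mathbb{R})$ having non-real spectrum of modulus $1$. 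Then $g$ has an elliptic periodic orbit through $O$, as required.

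\emph{The main obstacle.} The real difficulty is to perform these perturbations without leaving $\text{Diff}^{~1}_{\mu, R}(M)$: by \eqref{reversivel1}, a perturbation supported near $\cO(p)$ forces, through $R$, a companion perturbation supported near $R(\cO(p))$, and one must check that the two are compatible and that the companion one spoils neither the rotation installed near $\cO(p)$ nor the fact that $p$ stays in $O$. When $\cO(p)$, $R(\cO(p))$ and the one-dimensional submanifold $Fix(R)$ are pairwise disjoint the two perturbations decouple and this is routine. The delicate case --- where reversibility genuinely bites --- is when $\cO(p)$ is $R$-symmetric or meets $Fix(R)$: the return map is then constrained by $DR_p\,Dg^n_p=(Dg^n_{R(p)})^{-1}DR_p$, obtained by differentiating \eqref{reversivel1}, and one has to check that the matrices satisfying this reversibility relation still reach the elliptic locus. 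They do: relative to the normal form of the isometric involution $R$ along $Fix(R)$ these are the $\SL(2,\mathbb{R})$ matrices with equal diagonal entries, which can have trace of any modulus $<2$. Establishing this reversible linear algebra together with the local normal form of $R$ near $Fix(R)$ --- both carried out in Section \ref{Perturbation} --- is what makes the whole construction work in the reversible category, and is the step I expect to be the most technical.
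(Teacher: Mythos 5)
Your route is genuinely different from the one in the paper, and it contains a gap at its central step. You go through the Bochi--Ma\~n\'e theorem of \cite{BCR}: pick a recurrent point $x\in O$ with zero Lyapunov exponents, close its orbit via Theorem~\ref{Closing}, and argue that because $\tfrac1N\log\|Df^N_x\|\to0$ the return matrix can be rotated into the elliptic locus. The problem is that the zero exponent at $x$ is an \emph{asymptotic} statement about the orbit of $x$, whereas the closing construction (the Fundamental Lemma of \cite{PR}) closes up a sub-segment $f^i(y),\dots,f^j(y)$ of the orbit of a \emph{nearby} point $y$, and the period of the created orbit is $j-i$. What the rotation argument needs is $\tfrac1{j-i}\log\|Df^{\,j-i}_{f^i(y)}\|$ small relative to the allowed perturbation size, and this quantity is not controlled by the exponent of $x$: even if $\tfrac1i\log\|Df^i_y\|$ and $\tfrac1j\log\|Df^j_y\|$ are both $\le\delta$, submultiplicativity only gives $\log\|Df^{\,j-i}_{f^i(y)}\|\le\delta(i+j)$, which is useless when $j-i\ll j$. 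This is precisely the difficulty that forces the quantitative periodic-orbit machinery of \cite{BGV} into the argument; your sketch instead identifies the reversibility bookkeeping as ``the main obstacle,'' but that part is comparatively benign (and the paper sidesteps your ``delicate symmetric case'' entirely by working only with $(R,f)$-free orbits, which by Proposition~\ref{D} generically carry full measure).

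For contrast, the paper's proof never invokes zero Lyapunov exponents. It perturbs $f$ into the residual set $\mathscr{Q}$ of Lemma~\ref{good}, so that periodic orbits satisfying alternative (2) or (3) of Theorem~\ref{mainBGV} (the reversible version of \cite[Corollary 2.19]{BGV}) are dense in $M$. If some orbit through $O$ falls in case (3), it can be made parabolic and then elliptic by the reversible Franks Lemma~\ref{Franks}. If instead all orbits through $O$ are $m$-uniformly hyperbolic, their closure is a hyperbolic set containing the open set $O$, which contradicts Theorem~\ref{empty} (a hyperbolic set for a non-Anosov map has empty interior, proved via Zehnder smoothing, Bochi--Viana, and the Szpilrajn theorem). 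So the hyperbolicity obstruction is excluded by a topological/measure-theoretic argument about hyperbolic sets, not by an exponent computation along a recurrent orbit. If you want to salvage your approach, you would essentially have to reprove a finite-segment version of Theorem~\ref{mainBGV} for the closed-up orbit, at which point you are back to the paper's argument.
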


An interesting consequence of Proposition~\ref{newhouse} relies on the denseness of elliptic periodic orbits:

\begin{maintheorem}\label{newhouse2}
There exists a $C^1$-residual $\mathscr{R}\subset\text{Diff}^{~1}_{\mu, R}(M)$ such that any $f\in \text{Diff}^{~1}_{\mu,R}(M)$ is Anosov or else the elliptic periodic orbits of $f$ are dense in $M$. 
\end{maintheorem}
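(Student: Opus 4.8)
The plan is to obtain Theorem~\ref{newhouse2} from Proposition~\ref{newhouse} by a Baire-category argument, using that elliptic periodic orbits persist under $C^1$-perturbations. First I would fix a countable basis $\{U_n\}_{n\in\NN}$ of nonempty open subsets of $M$, write $\cA\subset\text{Diff}^{~1}_{\mu,R}(M)$ for the ($C^1$-open, since hyperbolicity of the whole manifold is robust) set of Anosov maps, and, for each $n\in\NN$, set
\[
\cE_n=\bigl\{\,f\in\text{Diff}^{~1}_{\mu,R}(M)\ :\ f \text{ has an elliptic periodic orbit meeting } U_n\,\bigr\}.
\]
The residual set in the statement will then be $\mathscr{R}=\bigcap_{n\in\NN}(\cA\cup\cE_n)$: indeed, if $f\in\mathscr{R}$ is not Anosov, then $f\in\cE_n$ for every $n$, and since $\{U_n\}$ is a basis of $M$ this says exactly that the elliptic periodic orbits of $f$ are dense. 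So the whole task reduces to proving that each $\cA\cup\cE_n$ is $C^1$-open and $C^1$-dense.

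For the openness of $\cE_n$ I would use that an elliptic periodic point $p$ of period $\ell$ is nondegenerate: because $Df^\ell_p\in\SL(2,\RR)$ has non-real spectrum, $1$ is not an eigenvalue of $Df^\ell_p$, so the implicit function theorem gives a $C^1$-continuation $g\mapsto p(g)$ of the periodic point on a $C^1$-neighbourhood of $f$, with $Dg^\ell_{p(g)}\to Df^\ell_p$ as $g\to f$. The ellipticity condition $|\operatorname{tr}(Dg^\ell_{p(g)})|<2$, the condition that $p(g)$ has exact period $\ell$, and the condition that its orbit still meets $U_n$ are all $C^1$-open; hence $\cE_n$, and a fortiori $\cA\cup\cE_n$, is $C^1$-open. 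This persistence does not even require remaining inside $\text{Diff}^{~1}_{\mu,R}(M)$, so reversibility causes no difficulty in this step.

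For the density of $\cA\cup\cE_n$, I would take $f\in\text{Diff}^{~1}_{\mu,R}(M)$ and a $C^1$-neighbourhood $V$ of $f$. If $V\cap\cA\neq\emptyset$ we are done. Otherwise $V$ is an open neighbourhood of $f$ disjoint from $\cA$, so $f$ lies in the $C^1$-interior of the complement of the Anosov maps; applying Proposition~\ref{newhouse} with the open set $O=U_n$ and with $\epsilon>0$ small enough that the $\epsilon$-$C^1$-ball around $f$ is contained in $V$, I obtain $g\in V\cap\text{Diff}^{~1}_{\mu,R}(M)$ with an elliptic periodic orbit through $U_n$, i.e. $g\in V\cap\cE_n$. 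Thus $\cA\cup\cE_n$ is $C^1$-dense, and $\mathscr{R}=\bigcap_n(\cA\cup\cE_n)$ is $C^1$-residual, which finishes the argument.

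I do not expect a genuine obstacle in this deduction: all of the hard work is already carried out in Proposition~\ref{newhouse} (which itself rests on the Reversible $C^1$-Closing Lemma and the perturbation lemmas of Section~\ref{Perturbation}). The only point that needs a little care is the $C^1$-openness of $\cE_n$, and even that is routine, being a consequence of the nondegeneracy of elliptic periodic orbits together with the openness of the spectral inequality $|\operatorname{tr}|<2$ defining ellipticity for area-preserving maps.
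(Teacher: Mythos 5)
Your argument is correct and is essentially the paper's proof: both deduce the theorem from Proposition~\ref{newhouse} by a Baire-category argument, intersecting over a countable family of open sets of $M$ the ($C^1$-open) condition of having an elliptic periodic orbit through that set, with density supplied by the proposition on the complement of the Anosov maps. Your write-up is in fact slightly more complete than the paper's, since you justify the $C^1$-openness of the elliptic-orbit condition (via nondegeneracy and the open trace inequality $|\operatorname{tr}|<2$), which the paper only asserts.
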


Recalling that the only surface that admits Anosov diffeomorphisms is the torus $\mathbb{T}^2=\mathbb{R}^2 / \mathbb{Z}^2$ (see \cite{F2}), when $M \neq \mathbb{T}^2$, the previous result can be stated in a more powerful way:
\begin{corollary}
If $M \neq \mathbb{T}^2$, then there exists a $C^1$-residual $\mathscr{R}\subset\text{Diff}^{~1}_{\mu, R}(M)$ such that any $f\in \text{Diff}^{~1}_{\mu,R}(M)$ the elliptic periodic orbits of $f$ are dense in $M$.
\end{corollary}

We also obtain the proof of the stability conjecture for area-preserving reversible maps. Recall, that $f\in \text{Diff}^{~1}_{\mu,R}(M)$ is said to be $C^1$-\emph{structurally stable} if there exists a $C^1$-neighbourhood $\mathcal{U}\subset \text{Diff}^{~1}_{\mu,R}(M)$ such that, for any $g\in\mathcal{U}$, there exists a homeomorphism $h\colon M\rightarrow M$ (not necessarily in $\text{Diff}^{~1}_{\mu,R}(M)$) such that $f\circ h=h\circ g$.

\begin{maintheorem}\label{SC}
A map $f\in \text{Diff}^{~1}_{\mu,R}(M)$ is $C^1$-structurally stable if and only if $f$ is Anosov.
\end{maintheorem}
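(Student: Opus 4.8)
The plan is to prove Theorem~\ref{SC} by establishing the two implications separately, the easy direction first. If $f$ is Anosov, then structural stability within $\text{Diff}^{~1}_{\mu,R}(M)$ follows from the classical structural stability of Anosov diffeomorphisms: Anosov-ness is a $C^1$-open property, any $g$ that is $C^1$-close to $f$ is Anosov and topologically conjugate to $f$ via a homeomorphism $h$, and this $h$ need not respect either $\mu$ or $R$, so no extra work is needed. (One should remark that the Anosov diffeomorphisms inside $\text{Diff}^{~1}_{\mu,R}(M)$ form a nonempty open set, as analysed in \cite[\S4]{BCR}.)

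For the converse, I would argue by contraposition: assume $f$ is not Anosov and show it cannot be $C^1$-structurally stable. The key tool is Theorem~\ref{newhouse2} (the reversible Newhouse dichotomy): since $f$ is not Anosov, arbitrarily $C^1$-close to $f$ there is a map $g\in\text{Diff}^{~1}_{\mu,R}(M)$ possessing an elliptic periodic orbit — more precisely, I would combine Proposition~\ref{newhouse} (if $f$ lies in the $C^1$-interior of the non-Anosov maps) with the limit case where $f$ is on the boundary. In the boundary case $f$ can be $C^1$-approximated by an Anosov map $g_1$ and also, since $f$ is not itself Anosov, by non-Anosov maps; if $f$ were structurally stable, every $C^1$-close map would be conjugate to $f$, hence all would be Anosov or all non-Anosov, contradicting the coexistence of both types arbitrarily near $f$. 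So it suffices to handle the $C^1$-interior case and produce, $C^1$-close to $f$, a map $g$ with an elliptic periodic orbit $p$ of some period $n$. The eigenvalues of $Dg^n_p$ are non-real of modulus one; a further arbitrarily small $R$-reversible area-preserving perturbation (using the perturbation toolbox of Section~\ref{Perturbation}, in particular a reversible analogue of Franks' lemma or a local rotation-number adjustment near $p$) changes the rotation number, so that $g$ is not topologically conjugate to nearby maps — in particular $g$ is not structurally stable, and since $g$ can be taken arbitrarily $C^1$-close to $f$, neither is $f$. Alternatively, and more cleanly: an elliptic periodic point is not a persistent feature, and the failure of local structural stability at $p$ (the rotation number is a topological conjugacy invariant that varies under perturbation) obstructs structural stability of $f$ itself.

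I expect the main obstacle to be the boundary case, namely a non-Anosov $f$ that is nonetheless in the $C^1$-closure of the Anosov maps and to which Proposition~\ref{newhouse} does not directly apply. The resolution is the standard dichotomy argument sketched above: structural stability of $f$ would force an entire $C^1$-neighbourhood $\mathcal{U}$ of $f$ to consist of mutually conjugate maps; since being Anosov is a conjugacy-invariant property and $\mathcal{U}$ meets both the Anosov set and its complement (the latter because $f\notin$ Anosov and Anosov is open, the former by assumption that $f$ is in the closure), we reach a contradiction. A secondary technical point is ensuring that the perturbation creating (or modifying) the elliptic orbit stays inside $\text{Diff}^{~1}_{\mu,R}(M)$ — but this is exactly what the reversible perturbation lemmas of Section~\ref{Perturbation} and the proof of Theorem~\ref{newhouse2} are designed to deliver, so I would invoke them as black boxes rather than redo the construction. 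Putting the two implications together yields the stated equivalence.
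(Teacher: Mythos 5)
Your overall architecture matches the paper's: the Anosov direction is the classical stability theorem, and the converse proceeds by producing, arbitrarily $C^1$-close to a non-Anosov $f$, maps with an elliptic periodic point and then using the non-persistence of the local rotation structure to contradict structural stability (the paper phrases this by building two perturbations $g_1,g_2$ of $f$, both conjugate to $f$ by structural stability, hence to each other, and deriving a contradiction from that conjugacy). However, the step where you actually kill structural stability has a genuine gap. You propose to ``change the rotation number'' by a reversible Franks-type perturbation and then assert that ``the rotation number is a topological conjugacy invariant that varies under perturbation.'' For a $C^1$ elliptic periodic point the rotation angle of the derivative $Dg^n_p$ is \emph{not} a topological conjugacy invariant, and Franks' lemma only prescribes the derivative at the orbit, which determines nothing about the topological conjugacy class of the local dynamics. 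The paper's mechanism is different and essential: using the Pasting Lemma of Arbieto--Matheus, one makes the map coincide with an actual rigid rotation on a whole neighbourhood $V$ of $x_0$, once with rational angle ($g_1$) and once with irrational angle ($g_2$). The invariant that then distinguishes them under the conjugacy $h=h_1^{-1}\circ h_2$ is the periodic-point structure: $g_1$ has an open set of periodic points in $V$, while $g_2$ has $x_0$ as its only periodic point there. Without the pasting step your argument does not produce any topological obstruction.

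Two secondary remarks. First, your reduction ``$g$ close to $f$ is not structurally stable, hence $f$ is not'' is valid, since every map in the conjugacy neighbourhood of a structurally stable $f$ is itself structurally stable; this is a legitimate (and slightly cleaner) way to organize the contradiction than the paper's. Second, your treatment of the boundary case rests on the claim that being Anosov is a topological conjugacy invariant, which is false in general (a non-Anosov diffeomorphism can be topologically conjugate to an Anosov one); the paper does not make this claim, and if you want to handle maps in $\overline{\mathscr{A}}\setminus\mathscr{A}$ separately you need a different argument there as well.
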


More precise statements of the results will be given throughout the article.
\medbreak

 Since there is a gap in the classic literature concerning the reversible systems, in the present paper we revisit the construction of perturbation toolboxes, namely local perturbations lemmas \cite{BCR} and the Franks-type lemma \cite{BCR} in Section \ref{Perturbation}, after having provided some preliminaries and notation in Section \ref{preliminaries}. The proof of Theorem \ref{Closing} is done is Section \ref{closinglemma} and those of Proposition \ref{newhouse}, Theorems \ref{newhouse2} and \ref{SC} are addressed in Section \ref{mains}. We also revisit Newhouse's proof ~\cite{N} that obtained a dense set of elliptic points via the existence of homoclinic tangencies of the invariant manifolds of periodic orbits. At the end of this section, the reader will realize why the proof of Proposition \ref{newhouse} does not run along the same lines to that of \cite{N}. 

\section{Preliminaries}
\label{preliminaries}
In this section, we describe precisely the ambient space and properties of the maps under consideration in the article. 
We introduce some general features and the property of reversibility for a map on the surface $M$. 

\subsection{Generalities}\label{Define}
We will use the canonical norm of a bounded linear map $A$ given by $\|A\|=\sup_{\|v\|=1}\|A\, v\|$. By Darboux's theorem (see e.g.~\cite[Theorem 1.18]{MZ}) there exists an atlas $\{\varphi_j\colon V_j\to\mathbb{R}^{2}\}$, where $V_j$ is an open subset of $M$, satisfying $\varphi_j^*\omega_0=\omega$ with $\omega_0= dx\land dy$. Since $M$ is compact we can always take an atlas with a finite number of charts, say $k$. We fix once and for all the finite atlas $\{\varphi_j\colon V_j\to\mathbb{R}^{2}\}_{j=1}^k$. Given $x\in V_j$, for some $j\in\{1,...,k\}$, let $B(\varphi_j(x),r)$ stands for the open ball of radius $r$ centered in $\varphi_j(x)$. Clearly, if $r$ is small enough we get that $\varphi^{-1}_j(B(\varphi_j(x),r))\subset V_j$. In this case we  will refer to $B(x,r)$ the set $\varphi^{-1}_j(B(\varphi_j(x),r))$. 
We notice that all the estimates along the present paper are made in
using the Darboux coordinate charts (\cite{MZ}). Hence, we may assume that $M = \mathbb{R}^{2}$, $T_x M = \mathbb{R}^{2}$, and the exponential map 
$exp_x\colon T_x M \rightarrow M$, which do not preserve necessarily a symplectic structure, is, under these assumptions, the identity.

\subsection{Reversibility}
Here, we introduce basic definitions and recall some results on symmetric systems, whose proof can be found in \cite{LR,RQ}, where it has been presented a compact survey of the literature on reversible dynamical systems.  We say that the diffeomorphism $R\colon M\rightarrow M$ has $n$-degree if $R^n =Id$, where $n\in\mathbb{N}$ is the lower positive integer that satisfy the previous equality and $Id$ the identity map. In the case $n=2$ we have that $R$ is an \emph{involution}, i.e., $R^2=R\circ R=Id$. We say that $R$ is a \emph{reversing symmetry} of a diffeomorphism $f\colon M\rightarrow M$ if we have: 
\begin{equation}\label{RS}
R\circ f = f^{-1}\circ R.
\end{equation} 

\medbreak

Identity (\ref{reversivel1}) implies that if $\mathcal{O}(x)$ is an orbit of $x\in M$, then $R(O(x))$ is also an orbit  where the time is reverted. In other words, a reversing symmetry maps orbits into orbits reversing time direction. 
The definition of reversibility is  algebraic and does not rely on any differentiability property of the diffeomorphism $f$. Note that saying that $f$ is $R$-reversible
 means that $R$ conjugates $f$ and $f^{-1}$. 
 \medbreak
 The reversing symmetries that we are going to consider along the paper will be \emph{isometries} meaning that given the metric $\textbf{g}$ related to the Riemannian structure in $M$ we have $\textbf{g}=R^*\textbf{g}$, that is, the pull-back of $\textbf{g}$, via $R$,  leave the metric $\textbf{g}$ invariant. Since the tangent map of $R$ is a linear isometry it is clear that $R\in\text{Diff}^{1}_{\mu}(M)$. 
In addition, we also assume that the closed subset of fixed points of $R$, that is $Fix(R):=\{x \in M: R(x)=x\}$, has dimension equal to one as is the most common in several examples of the literature -- see for instance \cite{LR} and references therein.

\section{Perturbation Lemmas}
\label{Perturbation}
In the present section, we revisit certain general perturbation statements that are at the basis of most results referred in the article. These perturbation lemmas have been proved in the $C^1$-topology. 
\medbreak
More precisely, we state two useful perturbation lemmas that are the basis of most results mentioned in the text: the first one, adapted from \cite{BCR}, gives a way to perform local perturbations among reversible systems. The second relies on the Franks Lemma \cite{F}, allowing to realize locally small abstract perturbations of the derivative along periodic or non-periodic orbits. The two types of results combine perturbations in the area-preserving setting that leave a reversing symmetry invariant.
\medbreak

The next result shows how to perform a local perturbation in the world of reversible systems in order to keep the perturbations inside this setting. Two general assumptions about the point $x\in M$ around where we are doing a $R$-reversible perturbation are required: $f(x)\neq R(x)$ and $x\notin Fix(R)$. Later on Proposition \ref{D} we will se that these conditions are generic in our context.

\begin{lemma}(\cite[Lemma 7.1]{BCR})\label{local}
Given $f \in \text{Diff}^{~1}_{\mu,R}(M)$ and $\eta>0$, there exist $\rho>0$ and $\zeta>0$ such that, for any point $x\in M$, whose orbit by $f$ is not periodic and $f(x)\not= R(x)$, and every $C^1$ area-preserving diffeomorphism $h\colon M\rightarrow M$, coinciding with the Identity in $M\backslash B(x,\rho)$ and $\zeta$-$C^1$-close to the Identity, there exists $g\in\text{Diff}^{~1}_{\mu, R}(M)$ which is $\eta$-$C^1$-close to $f$ and such that $g = f$ outside $C$ and $g=f\circ h$ in $B(x,\rho)$.
\end{lemma}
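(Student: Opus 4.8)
The plan is to look for $g$ of the form $g=f\circ\Phi$, where $\Phi$ is an area-preserving $C^{1}$-perturbation of the identity obtained by putting together the prescribed bump $h$, supported in $A_{1}:=B(x,\rho)$, with a second ``mirror'' bump supported near $f^{-1}(R(x))=R(f(x))$ whose shape is forced by the reversibility relation. First I would record the elementary consequences of $R\circ f=f^{-1}\circ R$ and $R\circ R=Id$, namely $R\circ f\circ R=f^{-1}$, $f^{-1}\circ R=R\circ f$ and $f\circ R=R\circ f^{-1}$, and note that $R$, $f$, $h$ are area-preserving while $R$, being an isometry, carries metric balls to metric balls. Then I would explain why the naive choice $g=f\circ h$ cannot be $R$-reversible unless $f(x)=R(x)$: unwinding $R\circ g\circ R=g^{-1}$ shows $g$ must differ from $f$ not only on $A_{1}$ but also on $A_{2}:=f^{-1}(R(B(x,\rho)))$, a ball about $f^{-1}(R(x))$ — and since $f(x)\neq R(x)$ we have $x\neq f^{-1}(R(x))$, so that for $\rho$ small enough $A_{1}$ and $A_{2}$ are disjoint and each sits inside a single Darboux chart, which lets one prescribe the two modifications independently.

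With such a $\rho$ fixed I would then set $\Phi:=h$ on $A_{1}$, $\Phi:=h':=f^{-1}\circ R\circ h^{-1}\circ R\circ f$ on $A_{2}$, and $\Phi:=Id$ elsewhere, and put $g:=f\circ\Phi$. Because $h$ (hence $h^{-1}$) is the identity outside $B(x,\rho)$, the map $h'$ is the identity outside $f^{-1}(R(B(x,\rho)))=A_{2}$, so $\Phi$ is a well-defined area-preserving $C^{1}$ diffeomorphism equal to $Id$ off $C:=A_{1}\cup A_{2}$; hence $g=f$ off $C$, $g=f\circ h$ on $B(x,\rho)$, and $g$ is area-preserving as a composition of area-preserving maps.

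The heart of the argument is to verify that $g$ is $R$-reversible, i.e.\ $R\circ g\circ R=g^{-1}$, which after substituting $g=f\circ\Phi$ and using $R\circ f\circ R=f^{-1}$ becomes equivalent to
\[
f^{-1}\circ\big(R\circ\Phi\circ R\big)\circ f=\Phi^{-1}.
\]
I would check this pointwise. It is immediate off $C$ once one notes $R(A_{1})=B(R(x),\rho)$ and $R(A_{2})=f(A_{1})$ (the latter from $R\circ f^{-1}=f\circ R$), so that $R\circ\Phi\circ R$ is supported in $B(R(x),\rho)\cup f(A_{1})$ and its $f$-conjugate in $A_{2}\cup A_{1}$, where $\Phi^{-1}=Id$ too. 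On $A_{1}$, for $z\in A_{1}$ one has $R(f(z))\in A_{2}$, hence $\big(R\circ\Phi\circ R\big)(f(z))=R\big(h'(R(f(z)))\big)$ and
\[
f^{-1}\circ\big(R\circ\Phi\circ R\big)\circ f=f^{-1}\circ R\circ h'\circ R\circ f=\big(f^{-1}\circ R\circ f^{-1}\big)\circ\big(R\circ h^{-1}\circ R\big)\circ\big(f\circ R\circ f\big)
\]
on $A_{1}$; since $f^{-1}\circ R\circ f^{-1}=R\circ f\circ f^{-1}=R$ and $f\circ R\circ f=R\circ f^{-1}\circ f=R$, the right-hand side collapses to $R\circ\big(R\circ h^{-1}\circ R\big)\circ R=h^{-1}=\Phi^{-1}$, and the same computation on $A_{2}$ returns $(h')^{-1}$. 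Thus $g\in\text{Diff}^{~1}_{\mu,R}(M)$.

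It then remains to control the $C^{1}$-size: as $h$ is $\zeta$-$C^{1}$-close to $Id$, so is $h^{-1}$ (for $\zeta$ small), whence $\|h'-Id\|_{C^{1}}\le C(f,R)\,\zeta$ and $\|\Phi-Id\|_{C^{1}}\le C'(f,R)\,\zeta$; since $f$ is $C^{1}$ with $Df$ uniformly continuous on the compact $M$, $\|g-f\|_{C^{1}}=\|f\circ\Phi-f\|_{C^{1}}\to 0$ as $\zeta\to 0$ uniformly in $x$, so a suitable $\zeta=\zeta(\eta,f,R)$ does the job, while $\rho$ is taken small (depending on the base point only through $\dist(f(x),R(x))$ and the Darboux chart data). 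I expect the main obstacle to be precisely the reversibility check: one has to identify the correct mirror region $f^{-1}(R(B(x,\rho)))$ and mirror bump $h'$, and then rely on $f^{-1}\circ R=R\circ f$ and $f\circ R=R\circ f^{-1}$ conspiring to collapse $f^{-1}\circ R\circ h'\circ R\circ f$ back to $h^{-1}$; the subsidiary annoyance is keeping the balls $A_{1},A_{2},f(A_{1}),R(A_{1})$ pairwise disjoint and the construction nondegenerate, which is exactly what the hypotheses $f(x)\neq R(x)$, $x\notin Fix(R)$ and non-periodicity of the orbit of $x$ guarantee (the detailed bookkeeping being as in \cite[Lemma 7.1]{BCR}).
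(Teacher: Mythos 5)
Your construction is correct and is essentially the argument the paper relies on: the paper itself defers the proof to \cite[Lemma 7.1]{BCR}, but the symmetrizing ``mirror bump'' $h'=f^{-1}\circ R\circ h^{-1}\circ R\circ f$ supported on $f^{-1}(R(B(x,\rho)))=R(f(B(x,\rho)))$ is exactly the ``twin'' perturbation implicit in Remark~\ref{local2} and in the closing-lemma proof, and your reversibility check via $f^{-1}\circ R\circ f^{-1}=R=f\circ R\circ f$ is the right computation. The only caveat is that, as you note, the radius ensuring $B(x,\rho)\cap f^{-1}(R(B(x,\rho)))=\emptyset$ degenerates as $f(x)\to R(x)$, so the uniformity of $\rho$ claimed in the statement should really be read as uniform only on sets where $\dist(f(x),R(x))$ is bounded below; this is a defect of the statement as quoted, not of your argument.
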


\begin{remark}\label{local2}
The local perturbation $h$ can be done in such a way that we get $g\in\text{Diff}^{~1}_{\mu,R}(M)$ satisfying $g=h\circ f$. In fact, by Lemma~\ref{local} we can build $\tilde{g}=f^{-1}\circ h^{-1}$ supported in a ball $\tilde{\mathscr{B}}$ and follow the same idea. Finally, we obtain $g:=\tilde{g}^{-1}=h\circ f$ such that $g=f$ outside $\tilde{\mathscr{B}}\cup f(R(\tilde{\mathscr{B}}))$ and $g=h\circ f$ in $\tilde{\mathscr{B}}$.
\end{remark}

Another perturbation lemma which is a key step in order to obtain several results in dynamics is a perturbation which provides an abstract tangent action by a given map $C^1$-close to an initial one, and was first considered in \cite{F} for dissipative diffeomorphisms as a contribution to the solution of the \emph{Stability Conjecture}; it will also be used to prove Theorem~\ref{SC}. 
\medbreak

We  will consider a finite segment of a given orbit and a small perturbation of the derivative along that segment. Then, we ask if there exists a dynamical system close to the initial one such that its derivative equals the perturbations we considered. 
 
 The following  result is the version of Franks' lemma (see~\cite[Lemma 1.1]{F}) for reversing symmetric diffeomorphisms. Since $f$ is area-preserving, we make use of the conservative Franks' lemma proved in \cite{BDP}. As in \cite{BCR}, we should impose some restrictions.

\begin{definition}
\label{Free2}
Given a subset $X$ of $M$, we say that $X$ is $(R,f)$-free if: 
\begin{equation}
\label{free}
\text{for all}\,\, x,y \in X\,\,\text{we have}\,\,f(x)\not=R(y).
\end{equation}
\end{definition}

It has been proved in \cite[Lemma 3.4]{BCR} that if $f\in \text{Diff}^{~1}_{\mu, R}(M)$, $x\in M$ and $R(x)$ does not belong to the $f$-orbit of $x$, then this orbit is $(R,f)$-free. Thom Transversality Theorem allows us to conclude that:

\begin{proposition} \label{D}
There exists a residual $\mathscr{D}\subset \text{Diff}^{~1}_{\mu, R}(M)$ such that for any $f\in \mathscr{D}$ the set of orbits outside $Fix(R)$ which are not $(R,f)$-free is countable.
\end{proposition}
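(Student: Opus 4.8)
The statement asserts that for a $C^1$-residual set of area-preserving $R$-reversible maps, the collection of orbits sitting outside $\mathrm{Fix}(R)$ that fail to be $(R,f)$-free is countable. By the result of \cite[Lemma 3.4]{BCR} quoted just above, an orbit $\mathcal{O}(x)$ with $x \notin \mathrm{Fix}(R)$ fails to be $(R,f)$-free precisely when $R(x)$ lies on the $f$-orbit of $x$, i.e. when there is some $n \in \mathbb{Z}$ with $f^n(x) = R(x)$. So the plan is to show that, generically, the set
\[
E_n(f) := \{ x \in M \setminus \mathrm{Fix}(R) : f^n(x) = R(x) \}
\]
is a finite (or at worst countable) union of orbits for each $n$, and then take the countable union over $n \in \mathbb{Z}$. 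The natural mechanism is a transversality argument: consider the map $\Phi_{n,f} : M \to M \times M$, $x \mapsto (f^n(x), R(x))$, and observe that $E_n(f)$ is $\Phi_{n,f}^{-1}(\Delta)$ where $\Delta$ is the diagonal. If $\Phi_{n,f}$ is transverse to $\Delta$, then $E_n(f)$ is a $0$-dimensional submanifold, hence discrete, hence (by compactness of $M$) finite; and since both $f^n$ and $R$ commute appropriately with the dynamics, $E_n(f)$ is a union of finitely many $f$-orbits.

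First I would set up, for each fixed $n$, a parametrized transversality statement: the set of $f \in \text{Diff}^{~1}_{\mu,R}(M)$ for which $\Phi_{n,f} \pitchfork \Delta$ is $C^1$-open and $C^1$-dense. Openness is the easy part: transversality to a compact submanifold is an open condition, and the restriction to $M \setminus \mathrm{Fix}(R)$ (which is itself a condition about where the intersections occur — one checks the solutions never accumulate on $\mathrm{Fix}(R)$, or handles that boundary separately) does not spoil it. For density, one perturbs $f$ near a point $x$ where $f^n(x) = R(x)$ but the transversality fails; here is where one invokes the reversible local perturbation toolbox, namely Lemma~\ref{local} and Remark~\ref{local2}. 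The subtlety is that an arbitrary perturbation of $f$ is not allowed — it must stay in $\text{Diff}^{~1}_{\mu,R}(M)$ — so one must arrange a compactly supported area-preserving perturbation $h$, applied away from the orbit-pairings forbidden by reversibility, that tilts $Df^n_x$ relative to $DR_x$ enough to make the image of $\Phi_{n,f}$ cross $\Delta$ cleanly. Since $R$ is fixed and $f \mapsto f^n$ depends smoothly on $f$, a finite-codimension jet perturbation suffices; this is a standard Thom–Abraham parametric transversality argument adapted to the conservative reversible category, exactly as the text signals by writing ``Thom Transversality Theorem allows us to conclude''. Taking the intersection $\mathscr{D} = \bigcap_{n \in \mathbb{Z}} \{f : \Phi_{n,f} \pitchfork \Delta\}$ gives a residual set, and for $f \in \mathscr{D}$ each $E_n(f)$ is a finite union of orbits, so the non-$(R,f)$-free orbits outside $\mathrm{Fix}(R)$ form a countable set.

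The main obstacle I anticipate is the density step in the presence of two competing constraints: one wants to perturb $Df^n_x$ to make $\Phi_{n,f}$ transverse to $\Delta$, but the perturbation must be (i) area-preserving and (ii) compatible with $R$-reversibility. Lemma~\ref{local}/Remark~\ref{local2} are designed precisely for this, but they come with the hypotheses $x \notin \mathrm{Fix}(R)$ and $f(x) \neq R(x)$ (more precisely, one needs the relevant finite orbit segment to be $(R,f)$-free), and on the ``bad'' set $E_n(f)$ these hypotheses are exactly what can degenerate. The way around this is to perturb along the orbit at a point $f^k(x)$ chosen so that the required pairing condition holds there — the orbit outside $\mathrm{Fix}(R)$ is $(R,f)$-free by \cite[Lemma 3.4]{BCR} unless $R(x)$ is itself on the orbit, which is the degenerate configuration one treats separately by a dedicated perturbation creating transversality of $f^n$ with $R$ at the single returning point, using the conservative Franks-type lemma alluded to above to adjust $Df^n$ abstractly and then realizing it. A secondary bookkeeping issue is to make sure that ``$0$-dimensional'' genuinely yields \emph{countable}: $E_n(f)$ is a discrete subset of the compact manifold $M$, hence finite, and there are countably many $n$, so the union is countable — but one should be careful that the orbit through a point of $E_n(f)$ might be infinite, which is fine since it is a single orbit and we are counting orbits, and the statement only claims countably many orbits, not countably many points. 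Finally I would note that, as the text foreshadows with Proposition~\ref{D}, this residual set $\mathscr{D}$ is exactly the one on which the hypotheses of Lemma~\ref{local} are available for almost every starting point, closing the loop with the rest of the perturbation machinery.
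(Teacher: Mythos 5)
Your proposal follows exactly the route the paper indicates: reduce via \cite[Lemma 3.4]{BCR} to the condition that $R(x)$ lies on the $f$-orbit of $x$, i.e. $f^n(x)=R(x)$ for some $n\neq 0$ (the case $n=0$ being precisely the excluded set $Fix(R)$), and apply Thom transversality to $x\mapsto (f^n(x),R(x))$ versus the diagonal so that, for a residual set of $f$, each such solution set is zero-dimensional, hence finite, and the non-$(R,f)$-free orbits form a countable union over $n$. The paper gives no further detail itself (it defers the proof to \cite{BCR}), and your sketch --- including the correct identification of the delicate density step, where the reversible perturbation of Lemma~\ref{local} necessarily acts simultaneously at two $R$-paired points of the same orbit and one must check the combined effect still produces transversality --- is consistent with that intended argument.
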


See the proof in \cite{BCR}. As a trivial conclusion of the previous result and the fact that $\dim(Fix(R))=1$ we obtain that generically in $ \text{Diff}^{~1}_{\mu, R}(M)$ the set of $(R,f)$-free orbits has full Lebesgue measure.
\medbreak
We will now consider an area-preserving reversible diffeomorphism, a
finite set in $M$ and an abstract tangent action that performs a small perturbation of the derivative along that
set. Then we will search for an area-preserving reversible diffeomorphism, $C^1$ close to the initial one, whose
derivative equals the perturbed cocycle on those iterates. To find such a perturbed diffeomorphism, we will
benefit from the argument, suitable for area-preserving systems, presented in \cite{BDP}.

\begin{lemma}\cite[Lemma 7.4]{BCR}\label{Franks}
Fix an isometric involution $R$ and $f\in\text{Diff}^{~1}_{\mu,R}(M)$. Let $\Theta:=\{x_1,x_2,...,x_k\}$ be a finite set of distinct points in $M$. Asume that $\Theta$ is $(R,f)$-free. Denote by $Q=\oplus_{x\in \Theta} T_x M$ and $Q'=\oplus_{x\in \Theta} T_{f(x)} M$. Let $G\colon Q\rightarrow Q'$ be a linear area-preserving map. For every $\epsilon>0$, there exists $\delta>0$, such that if $\|G-Df\|<\delta$, then there exists $g\in\text{Diff}^{~1}_{\mu,R}(M)$ $\epsilon$-$C^1$-close to $f$ and satisfying $Dg_x=G|_{T_x M}$ for every $x\in\Theta$. Moreover, if $K\subset M$ is a compact on $M$ and $K\cap\Theta=\emptyset$ then $g$ can be chosen such that $g=f$ in $K$.
\end{lemma}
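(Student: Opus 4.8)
The strategy is to reduce the reversible, area-preserving perturbation to the ordinary conservative Franks' lemma of \cite{BDP}, applied on an enlarged finite set that is closed under the reversing symmetry, and then to check that the perturbation produced there can be chosen to commute appropriately with $R$. First I would use the hypothesis that $\Theta=\{x_1,\dots,x_k\}$ is $(R,f)$-free: by Definition \ref{Free2} this means $f(x_i)\ne R(x_j)$ for all $i,j$, which (combined with $R^2=\mathrm{Id}$ and $R\circ f=f^{-1}\circ R$) guarantees that the $2k$ points $x_1,\dots,x_k,R(f(x_1)),\dots,R(f(x_k))$ — equivalently $x_1,\dots,x_k,f^{-1}(R(x_1)),\dots,f^{-1}(R(x_k))$ — are pairwise disjoint and that applying $f$ to them lands in $2k$ distinct points as well, so that a localized perturbation supported near the first $k$ points does not interfere with one supported near the last $k$.

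Next I would define the ``mirror'' perturbation. Given the target linear map $G\colon Q\to Q'$ with $\|G-Df\|<\delta$, the reversibility relation $R\circ f=f^{-1}\circ R$ differentiated at $x_i$ forces a compatibility constraint: any $g\in\text{Diff}^{~1}_{\mu,R}(M)$ with $Dg_{x_i}=G|_{T_{x_i}M}$ must also satisfy, at the point $y_i:=f^{-1}(R(x_i))$, the identity $Dg_{y_i}=DR_{x_i}^{-1}\circ (G|_{T_{x_i}M})^{-1}\circ DR_{y_i}$ (here I am using that $DR$ is a linear isometry, in particular area-preserving, and that $g$ and $g^{-1}$ are both $R$-reversible). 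So the enlarged target cocycle $\widehat G$ is: $G$ on $T_{x_i}M$, and the above conjugated inverse on $T_{y_i}M$; since $DR$ is an isometry and $G$ is area-preserving and $\delta$-close to $Df$, $\widehat G$ is still area-preserving and $C\delta$-close to $Df$ on the enlarged set $\widehat\Theta=\Theta\cup\{y_1,\dots,y_k\}$ for a constant $C$ depending only on $R$. Apply the conservative Franks' lemma \cite{BDP} on $\widehat\Theta$: for $\epsilon'>0$ (to be chosen as a function of $\epsilon$ and $R$) there is $\delta'>0$ so that if $C\delta<\delta'$ we obtain $h\in\text{Diff}^{~1}_\mu(M)$, $\epsilon'$-$C^1$-close to $f$, with $Dh=\widehat G$ on $\widehat\Theta$ and $h=f$ outside a union of arbitrarily small balls around the points of $\widehat\Theta$ (and, if desired, avoiding a prescribed compact $K$ disjoint from $\Theta$; shrink the balls so they also avoid $R(K)$ and are $(R,f)$-free).

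Finally I would symmetrize: set $g:=$ the map equal to $h$ on a small ball $B(x_i,\rho)$ around each $x_i$, equal to $f^{-1}\circ R^{-1}\circ h^{-1}\circ R\circ f$ near each $y_i$ (this is $R$-reversible by construction where it is defined, and its derivative at $y_i$ is exactly the conjugated inverse of $Dh_{x_i}=G$ prescribed above — this is where the choice of $\widehat G$ pays off), and equal to $f$ everywhere else; the disjointness from the $(R,f)$-free property ensures these three prescriptions are consistent and that the resulting $g$ is a well-defined diffeomorphism, area-preserving since each piece is, and $R$-reversible because $R\circ g$ and $g^{-1}\circ R$ agree on each of the three regions. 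Taking $\rho$ small and $\epsilon'$ small enough (here one uses continuity of $R$ and $f$ to pass from $\epsilon'$-closeness of $h$ to $\epsilon$-closeness of $g$), $g$ is $\epsilon$-$C^1$-close to $f$, satisfies $Dg_{x_i}=G|_{T_{x_i}M}$, and equals $f$ on $K$. The analogue of Remark \ref{local2} / Lemma \ref{local} (patching a conservative perturbation into an $R$-reversible one) is exactly the tool that makes the ``symmetrization'' step legitimate, and I would cite it rather than redo it.

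The main obstacle is the symmetrization step: one must be sure that the conservative perturbation $h$ from \cite{BDP} and its $R$-conjugated twin can be glued without destroying either the area-preserving property or the $C^1$-smallness, and without the supports colliding. This is precisely why the $(R,f)$-free hypothesis on $\Theta$ is indispensable — it is what guarantees that $\Theta$ and $f^{-1}(R(\Theta))$, together with their $f$-images, are $2k$-point sets with pairwise disjoint small-ball neighbourhoods, so the two halves of the perturbation live in genuinely separate regions. A secondary technical point is bookkeeping the constants: the isometry $R$ contributes only a fixed multiplicative factor, so $\delta$ can indeed be chosen purely in terms of $\epsilon$, $f$, and $R$, as claimed.
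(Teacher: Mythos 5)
The paper does not actually prove this lemma: it is quoted verbatim from \cite[Lemma 7.4]{BCR}, so there is no internal proof to compare against. Your overall strategy --- enlarge $\Theta$ by the mirror points $y_i=f^{-1}(R(x_i))=R(f(x_i))$, use $(R,f)$-freeness to see that $\widehat\Theta=\Theta\cup\{y_i\}$ consists of $2k$ distinct points with disjoint perturbation supports, invoke the conservative Franks lemma of \cite{BDP}, and then symmetrize \`a la Lemma~\ref{local} / Remark~\ref{local2} --- is exactly the expected route and is consistent with how the paper describes the argument of \cite{BCR}. Your identification of the derivative constraint forced at $y_i$ by reversibility is also correct in substance (modulo a base-point slip: since $R$ is an involution, $DR_{x_i}^{-1}=DR_{R(x_i)}$, and the forced map is $DR_{x_i}\circ G^{-1}\circ DR_{y_i}\colon T_{y_i}M\to T_{R(x_i)}M$).

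There is, however, one concrete error in the symmetrization step: the mirror map you write down near $y_i$, namely $f^{-1}\circ R^{-1}\circ h^{-1}\circ R\circ f$, is not the right one. From $g=R\circ g^{-1}\circ R$ one sees that if $g=h$ on a neighbourhood $U$ of $x_i$, then $g$ must equal $R\circ h^{-1}\circ R$ on $R(h(U))$, which is a neighbourhood of $R(f(x_i))=y_i$; this map has derivative $DR_{x_i}\circ(Dh_{x_i})^{-1}\circ DR_{y_i}$ at $y_i$, as required, and it glues seamlessly with $f$ where $h=f$ because $R\circ f^{-1}\circ R=f$. Your formula instead differs from $f$ only on a neighbourhood of $f^{-1}(y_i)$ (the extra conjugation by $f$ shifts the support by one backward iterate), and the linear map it realizes there involves $Df$ at $f^{-1}(x_i)$ rather than $G^{-1}$, so as written the glued map is neither $R$-reversible nor does it realize the prescribed cocycle. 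The fix is a one-line replacement, and once made it also shows that enlarging $\Theta$ before applying \cite{BDP} is redundant: it suffices to apply the conservative Franks lemma on $\Theta$ alone and let the symmetrization install the correct derivative at the $y_i$ automatically.
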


\medbreak

The following result is the peak of the well known Ma\~n\'e dichotomies on periodic orbits - the dominated splitting is the only obstruction to obtain trivial spectrum on periodic orbits. The abstract general result for dissipative systems was obtained in \cite[Corollary 2.19]{BGV}. Here we consider the area-preserving reversible version.

\begin{theorem}
\label{mainBGV} Let $f\in\text{Diff}^{~1}_{\mu,R}(M)$. Then for any $\epsilon>0$ there are two integers $m$ and $n$ such that, for any periodic
point $x$ of period $p(x) \geq n$:
\begin{enumerate}
\item the orbit of $x$ is an $(R,f)$-free set or either;
\item $f$ admits an $m$-uniform hyperbolic splitting along the orbit of $x$ or else;
\item for any neighbourhood $U$ of the orbit of $x$, there exists $g\in\text{Diff}^{~1}_{\mu,R}(M)$ $\epsilon$-$C^1$-close to $f$, coinciding with $f$ outside $U$ and on the orbit of $x$, and such that
$x$ is a parabolic point of $g$ for which the differential $Dg^{p(x)}_x$ has all eigenvalues
real and with the same modulus, thus equal to $\pm1$.
\end{enumerate}
\end{theorem}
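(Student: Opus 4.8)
The strategy is to transfer the dissipative statement \cite[Corollary 2.19]{BGV} to the area-preserving reversible category by replacing each of its three ``generic'' perturbation steps with the corresponding $R$-reversible, area-preserving tool developed above. First I fix $\epsilon>0$. Apply the dichotomy of \cite{BGV} to the underlying (non-reversible) area-preserving diffeomorphism $f\in\text{Diff}^{1}_{\mu}(M)$ and extract the two integers $m,n$ it produces, together with the corresponding threshold $\delta>0$ for the conservative Franks lemma (Lemma~\ref{Franks} with the constant $\epsilon$ we fixed). So let $x$ be a periodic point with $p(x)=p\geq n$. If the orbit of $x$ fails to be $(R,f)$-free we are in alternative (1) and there is nothing to prove; if $f$ already admits an $m$-uniformly hyperbolic splitting along the orbit of $x$ we are in alternative (2) and again nothing to prove. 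Hence assume from now on that the orbit $\Theta=\{x,f(x),\dots,f^{p-1}(x)\}$ is $(R,f)$-free and no $m$-dominated (equivalently, since we are in dimension $2$, uniformly hyperbolic) splitting exists along it.

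In this remaining case, the dissipative argument of \cite{BGV} supplies a linear area-preserving cocycle perturbation: a family of area-preserving maps $G_i:T_{f^i(x)}M\to T_{f^{i+1}(x)}M$ with $\|G_i-Df_{f^i(x)}\|<\delta$ whose composition $G_{p-1}\cdots G_0$ has both eigenvalues real and of the same modulus, necessarily $\pm1$ by area preservation, i.e. it is parabolic. Assemble these into a single linear map $G:Q\to Q'$ with $Q=\oplus_{y\in\Theta}T_yM$, $Q'=\oplus_{y\in\Theta}T_{f(y)}M$ in the sense of Lemma~\ref{Franks}, and observe $\|G-Df\|<\delta$. Because $\Theta$ is $(R,f)$-free, Lemma~\ref{Franks} applies verbatim: it yields $g\in\text{Diff}^{1}_{\mu,R}(M)$ that is $\epsilon$-$C^1$-close to $f$, coincides with $f$ on $\Theta$, has $Dg_y=G|_{T_yM}$ for every $y\in\Theta$, and — taking $K=M\setminus U$ for the chosen neighbourhood $U$ of the orbit, which is compact and disjoint from $\Theta$ — coincides with $f$ outside $U$. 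Then $Dg^p_x=G_{p-1}\cdots G_0$ is parabolic with eigenvalues $\pm1$, so $x$ is a parabolic point of $g$, which is exactly alternative (3).

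\textbf{The main obstacle.}
The one point that is not purely formal is checking that the cocycle perturbation produced by \cite{BGV} can be taken to respect the hypotheses of Lemma~\ref{Franks}: it must be \emph{area-preserving fibrewise} and, crucially, it must \emph{leave the derivative unchanged in a way compatible with reversibility}, i.e. we only need it on the orbit $\Theta$ and we need $\Theta$ to be $(R,f)$-free — which we have already arranged. The $(R,f)$-freeness is what lets the reversible Franks lemma impose the perturbed derivative on $\Theta$ while also fixing $g=f$ on the orbit and outside $U$ simultaneously; without it there could be collisions of the form $f(y)=R(y')$ forcing the reversibility constraint $R\circ g=g^{-1}\circ R$ to link the prescribed values at $y$ and $y'$, possibly inconsistently. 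A secondary nuisance is that \cite{BGV} is stated in arbitrary dimension with ``dominated splitting'' as the dichotomy's pivot, whereas here $\dim M=2$, so one must note that in this dimension an $m$-dominated splitting of an area-preserving cocycle along a periodic orbit is the same as $m$-uniform hyperbolicity (the two Oseledets directions are automatically summable/dominated once they are distinct, and equality of moduli forces both exponents to be zero); this identification is what turns the abstract ``no dominated splitting'' alternative of \cite{BGV} into our alternative (2). Once these two bookkeeping points are settled, the proof is the concatenation of \cite{BGV} with Lemma~\ref{Franks} exactly as above.
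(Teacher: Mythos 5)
Your proposal is correct and follows essentially the same route as the paper, which itself only sketches the argument: invoke the dichotomy of \cite{BGV}, dispose of alternatives (1) and (2) trivially, identify $m$-dominated splitting with $m$-uniform hyperbolicity in the two-dimensional area-preserving setting via \cite[Lemma 3.11]{Bo}, and realize the (rotation-built, hence area-preserving) cocycle perturbation of \cite{BGV} through the reversible conservative Franks lemma (Lemma~\ref{Franks}), using the $(R,f)$-freeness of the orbit. The two ``bookkeeping points'' you isolate are exactly the points (a)--(c) the paper flags as the places where care is needed.
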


The proof of Theorem~\ref{mainBGV} follows closely the steps of the work ~\cite{BGV}, this is why we shall omit it. However, we point out the key points that we have to be careful about:
\begin{enumerate}
\item [(a)] Firstly, in ~\cite{BGV}, the authors need not be worried about perturbation obstructions as the ones we are considering due to the reversibility: the periodic orbits in their perturbations need not to be $(R,f)$-free. In our case, since the Lebesgue measure cannot ``see'' the set of the orbits that are not $(R,f)$-free, we may exclude them of the scenario.
\medbreak
\item [(b)] Secondly, it is worth to stress that the item (2) of Theorem~\ref{mainBGV} refers to obtaining dominated splitting. By \cite[Lemma 3.11]{Bo}, in a two-dimensional area-preserving setting, $m$-uniform hyperbolicity is equivalent to $m$-dominated splitting.
\medbreak
\item [(c)] Finally, in the absence of uniform hyperbolicity, we will combine the arguments in ~\cite{BGV} with Lemma~\ref{Franks},
when perturbations are requested to obtain (3). The perturbations that are used in \cite{BGV} are rotations which are clearly area-preserving (see also \cite[Lemme 6.6]{BC} performed for $\SL(2,\mathbb{R})$ cocycles). These transformations are used to obtain real spectrum with the same modulus (\emph{i.e.} $\pm1$). 
\end{enumerate}

\bigskip

If $\mathscr{D}\subset \text{Diff}^{~1}_{\mu, R}(M)$ is the $C^1$-residual set of Proposition~\ref{D}, the  Kupka-Smale theorem in $ \text{Diff}^{~1}_{\mu, R}(M)$ allows us to conclude that:

\begin{lemma}
There exists a $C^1$-residual $\mathscr{D}\subset \text{Diff}^{~1}_{\mu, R}(M)$ such that for any $f\in \mathscr{D}$ the set of periodic orbits satisfying (2) or (3) of Theorem~\ref{mainBGV} are dense in $Per(f)$.
\end{lemma}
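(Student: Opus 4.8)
The plan is to obtain $\mathscr D$ as a countable intersection of $C^1$-residual sets and to show that, for $f\in\mathscr D$, the trichotomy of Theorem~\ref{mainBGV} reduces, on a dense family of periodic orbits, to the alternatives (2)--(3). Let $\mathscr D_0$ be the residual set of Proposition~\ref{D}, and let $\mathscr{KS}\subset\text{Diff}^{~1}_{\mu,R}(M)$ be a Kupka--Smale residual set. For $f\in\mathscr D_0\cap\mathscr{KS}$ we will use two facts. First, being Kupka--Smale, every periodic point of $f$ is non-degenerate, hence isolated as a fixed point of the corresponding iterate; thus $Per(f)$ is countable, in particular Lebesgue-null, and the set of $x\in M$ with non-periodic $f$-orbit has full measure. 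Second, for any periodic orbit $\mathcal O$ the relation $R\circ f=f^{-1}\circ R$ forces $R(\mathcal O)\cap\mathcal O\neq\emptyset\Rightarrow R(\mathcal O)=\mathcal O$, so that for periodic orbits being $(R,f)$-free coincides with not being $R$-symmetric; combining this with Proposition~\ref{D} and the full-measure statement for $(R,f)$-free orbits quoted after it, the set of $x\in M$ lying on a non-periodic $(R,f)$-free orbit has full Lebesgue measure, hence is dense in $M$.

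Fix a countable basis $\{W_j\}$ of $M$ and, for each $j$ and each $k\in\mathbb N$, let $m=m(1/k)$, $n=n(1/k)$ be the integers provided by Theorem~\ref{mainBGV}; let $\mathscr G_{j,k}$ be the set of $f\in\text{Diff}^{~1}_{\mu,R}(M)$ admitting a non-degenerate, $(R,f)$-free periodic orbit $\mathcal O\subset W_j$ of period $\ge n$ that satisfies alternative (2) or alternative (3) of Theorem~\ref{mainBGV}. I would argue that each $\mathscr G_{j,k}$ is $C^1$-open and $C^1$-dense. It is open because non-degenerate periodic orbits persist with their period, the conditions $\mathcal O\subset W_j$ and $R(\mathcal O)\cap\mathcal O=\emptyset$ are open, $m$-domination along a periodic orbit (which, as noted after Theorem~\ref{mainBGV}, amounts to hyperbolicity) is an open condition, and alternative (3) defines an open condition. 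For density, fix $f$; replacing it by a $C^1$-close map we may assume $f$ lies in the residual set of Theorem~\ref{Closing}. Pick $x\in W_j$ lying on a non-periodic $(R,f)$-free orbit and apply the Reversible $C^1$-Closing Lemma: for any prescribed $C^1$-distance and any prescribed small radius $r$ it yields a $g$ within that distance of $f$ having a periodic point in the ball of radius $r$ about $x$. Choosing $r$ and the distance small enough, and using that $x$ is not periodic for $f$, the resulting periodic orbit $\mathcal O\subset W_j$ has period $\ge n$; moreover, since the reversible closing is performed around the $(R,f)$-free point $x$, it can be arranged that $\mathcal O$ is $(R,g)$-free. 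Then alternative (1) of Theorem~\ref{mainBGV} is excluded for $\mathcal O$, so $\mathcal O$ satisfies (2) or (3); a further $C^1$-small reversible perturbation (through Lemma~\ref{Franks}, legitimate since $\mathcal O$ is $(R,g)$-free) makes $\mathcal O$ non-degenerate without disturbing (2) or (3), placing $g$ in $\mathscr G_{j,k}$.

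Finally set $\mathscr D:=\mathscr D_0\cap\mathscr{KS}\cap(\text{residual set of Theorem~\ref{Closing}})\cap\bigcap_{j,k}\mathscr G_{j,k}$, which is residual. For $f\in\mathscr D$, every open set contains a periodic orbit of $f$ satisfying alternative (2) or (3) of Theorem~\ref{mainBGV}; since such orbits lie in $Per(f)$ and are dense in $M$, they are a fortiori dense in $Per(f)$. The main obstacle, I expect, is alternative (1) of Theorem~\ref{mainBGV}: it leaves the non-$(R,f)$-free orbits entirely uncontrolled, and since $Per(f)$ is only countable for a Kupka--Smale $f$, one cannot finish by merely discarding the $R$-symmetric orbits; the $(R,f)$-free periodic orbits near a given point must instead be built, which forces one to run the Reversible Closing Lemma and, to realize alternative (3), the reversible Franks lemma (Lemma~\ref{Franks}), keeping the $(R,\cdot)$-freeness hypothesis intact through the closing and the subsequent derivative perturbation, since both devices require it. Verifying $C^1$-openness of conditions (2) and (3) and assembling the Baire-category argument are, by contrast, routine.
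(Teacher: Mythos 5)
Your proof is essentially correct, but it follows a different --- and considerably more explicit --- route than the paper. The paper disposes of this lemma in one sentence: it takes $\mathscr{D}$ to be the residual set of Proposition~\ref{D}, invokes the Kupka--Smale theorem in $\text{Diff}^{~1}_{\mu,R}(M)$, and asserts the conclusion, the idea being that alternative (1) of Theorem~\ref{mainBGV} concerns only the non-$(R,f)$-free orbits, which Proposition~\ref{D} makes ``few''. Your objection to that shortcut is well taken: since $Per(f)$ is countable for a Kupka--Smale map, knowing that the non-$(R,f)$-free orbits form a countable family says nothing by itself about density inside $Per(f)$. Your substitute --- a Baire argument with sets $\mathscr{G}_{j,k}$ whose density is obtained by running the Reversible Closing Lemma (Theorem~\ref{Closing}) at a non-periodic $(R,f)$-free point and then applying Lemma~\ref{Franks} --- in effect proves the stronger Lemma~\ref{good} (density in $M$) directly, which the paper only obtains one step later by combining the present lemma with Corollary~\ref{gdt}; the logical content is the same, but the decomposition is different, and your version is self-contained where the paper's is not. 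Three small points to tighten: the Closing Lemma applies only to Lebesgue-almost every $x$, so you must choose $x\in W_j$ in the intersection of that full-measure set with the full-measure set of points on non-periodic $(R,f)$-free orbits; the closed orbit produced need not be contained in $W_j$ (only the new periodic point $y\in B(x,r)$ is), so $\mathscr{G}_{j,k}$ should require $\mathcal{O}\cap W_j\neq\emptyset$ rather than $\mathcal{O}\subset W_j$; and openness of ``(2) or (3)'' is most easily seen by observing that, by Theorem~\ref{mainBGV} itself, every $(R,f)$-free periodic orbit of period at least $n$ automatically satisfies (2) or (3), so $\mathscr{G}_{j,k}$ reduces to the existence of a non-degenerate $(R,f)$-free periodic orbit of period at least $n$ meeting $W_j$, which is manifestly an open condition by persistence of non-degenerate orbits and openness of the $(R,f)$-free condition on finite sets.
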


If $\mathscr{Q}= \mathscr{D}\cap \mathscr{P}$, where $\mathscr{P}$ is the $C^1$-residual given in Corollary~\ref{gdt}, we easily obtain:

\begin{lemma}\label{good}
There exists a $C^1$-residual $\mathscr{Q}\subset \text{Diff}^{~1}_{\mu, R}(M)$ such that for any $f\in \mathscr{Q}$ the set of periodic orbits satisfying (2) or (3) of Theorem~\ref{mainBGV} are dense in $M$.
\end{lemma}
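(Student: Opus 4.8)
The plan is to intersect two residual sets and chase the density property through the intersection. First I would set $\mathscr{Q} := \mathscr{D} \cap \mathscr{P}$, where $\mathscr{D}$ is the $C^1$-residual set from the preceding lemma (for which the periodic orbits satisfying (2) or (3) of Theorem~\ref{mainBGV} are dense in $Per(f)$) and $\mathscr{P}$ is the $C^1$-residual set from Corollary~\ref{gdt} (for which the closure of the set of hyperbolic or elliptic periodic points of $f$ is all of $M$). Since a finite (indeed countable) intersection of $C^1$-residual subsets of the Baire space $\text{Diff}^{~1}_{\mu,R}(M)$ is again $C^1$-residual, $\mathscr{Q}$ is $C^1$-residual.

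Next I would fix $f \in \mathscr{Q}$ and argue that the set $S_f$ of periodic orbits of $f$ satisfying (2) or (3) of Theorem~\ref{mainBGV} is dense in $M$. Take any open set $O \subset M$. Because $f \in \mathscr{P}$, Corollary~\ref{gdt} gives a hyperbolic or elliptic periodic point $p \in O$; in particular $Per(f)$ meets $O$. Since $f \in \mathscr{D}$, the set $S_f$ is dense in $Per(f)$ with respect to the topology of $M$, so there is an orbit in $S_f$ arbitrarily close to the orbit of $p$, hence intersecting $O$ (after shrinking the chosen neighbourhood of $p$ so that it sits inside $O$). Thus $S_f \cap O \neq \emptyset$, and since $O$ was arbitrary, $S_f$ is dense in $M$.

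There is essentially no obstacle here: the only point to be slightly careful about is the meaning of ``dense in $Per(f)$'', namely that density is taken in the ambient manifold topology (so that an orbit in $S_f$ close to the orbit of $p$ actually lands in $O$), and that one uses $f \in \mathscr{P}$ precisely to ensure $Per(f)$ is itself dense in $M$ — without this, density of $S_f$ in $Per(f)$ would be vacuous in regions avoided by periodic points. Once both ingredients are in hand, the conclusion is immediate. This also tacitly uses that alternative (1) of Theorem~\ref{mainBGV} has been discarded on the residual set (by Proposition~\ref{D}, the non-$(R,f)$-free orbits form a countable, hence nowhere dense in the relevant sense, set), so that ``(2) or (3)'' is the generic alternative for periodic orbits of large period.
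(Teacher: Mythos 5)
Your proposal is correct and is essentially the paper's own argument: the paper defines $\mathscr{Q}=\mathscr{D}\cap\mathscr{P}$ with $\mathscr{D}$ the residual set from the preceding lemma (density of the good orbits in $Per(f)$) and $\mathscr{P}$ from Corollary~\ref{gdt} (density of $Per(f)$ in $M$), and states that the conclusion follows easily. You have simply written out the two-step density chase that the paper leaves implicit.
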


\section{The Reversible $C^1$-Closing Lemma}
\label{closinglemma}

A fundamental result on dynamical systems, which goes back to Poin\-car\'e, is the well known \emph{Closing Lemma}. In rough terms, we intend to \emph{close}, in a sense that we turn it into a periodic orbit, a given recurrent or non-wandering orbit by making a small perturbation on the original system. Until now, there are satisfactory answers to this problem if the small perturbations are with respect to lower topologies ($C^0$ and $C^1$). Despite the fact that the $C^0$-closing lemma is a quite simple exercise, the $C^1$ statement reveals several difficulties. The $C^1$-closing lemma was first established by Pugh \cite{P,P2} in the late 1960's and, in the early 1980's, by Pugh and Robinson \cite{PR} for a large class of systems as volume-preserving, symplectic diffeomorphisms as also for Hamiltonians. In what follows, we present the reversing symmetric version of it. But before that we will introduce two core tools developed in ~\cite{PR}. 
 \medbreak
\begin{enumerate}
\item [(A)] A fundamental key step to obtain the closing of trajectories is the \emph{Lift Lemma}, proved for area-preserving diffeomorphisms in \cite{PR} (see \S 8 b) or a) because our area-preserving context is both symplectic and volume-preserving). If very brief terms, \emph{lifting} implies that we can push points in a given direction, acting only locally and supported in a given small ball. The intensity of this perturbation is uniform from point to point and also proportional to the size of the ball. Due to our area-preserving and reversing symmetric restrictions the previous push must be done within this class. The area-preserving push is guaranteed by Pugh and Robinson judicious  perturbations. To treat the $R$-invariance of the perturbations adapted very carefully the ideas of ~\cite{PR}. 
\medbreak
\item [(B)] The other tool has more to do with the drawbacks intermediate recurrences and how we can use them in our favor and also to obtain some \emph{conformal} behavior. In fact, given $y$ and its iterate $f^n(y)$, very close to one another, if we intend to push $y$ to $f^{n}(y)$ by a small $C^1$-perturbation, two kind of problem occur: first a $C^1$-perturbation may not allow a sufficiently large shift as we have seen in (A), and second, nothing assures us that we have a clean scenario in their neighbourhood, say free from  intermediate recurrences formed by points $f^{i}(y)$ with $i=1,...,n-1$. Fortunatly, the ingenious ideas developed in \cite{PR} solve this hard obstructions. For that we have the \emph{Fundamental Lemma} in \cite[\S4]{PR}, which assures that we can always find two intermediate isolated recurrences:
\begin{itemize}
\item $f^{i}(y)$ and $f^{j}(y)$ with $i<j$ inside a box $B$ and 
\item if we inflate $B$ a little bit there are no elements in $f^{k}(y)$ for $k=\{0,...,n\}\setminus \{i,j\}$.
\end{itemize}
\end{enumerate}

\medbreak

It has been proved in \cite{BCR} that if $f\in \text{Diff}^{~1}_{\mu, R}(M)$, $x\in M$ and $\mathcal{O}(x)\cap \{R(x)\}=\emptyset$ then, the set $\mathcal{O}(x)$ is $(R,f)$-free. Recall Proposition \ref{D} that says that generically in $ \text{Diff}^{~1}_{\mu, R}(M)$ the set of $(R,f)$-free orbits has full Lebesgue measure. The proof of Theorem \ref{Closing} will be done in several steps, using arguments of ~\cite{PR}. We suggest that the reader follows the proof observing Figure \ref{Closing Lemma 1}.

\begin{proof}
Let $R$ be an isometric involution and $f \in \mathscr{D}\subset \text{Diff}^{~1}_{\mu, R}(M)$ where $\mathscr{D}$ is the residual set given Proposition~\ref{D}. Following Pugh and Robinson \cite{PR}, we will perform an $\epsilon$-perturbation of $f$ supported on small disks that will be specified later. We recall the \emph{Axiom Lift Lemma} proved in \cite{PR} that says that, in the pattern given by a basis $E$ of $T M$, one can shift the orbit through $p$ on the orbit through $\tilde{p}$ in $N$ steps, without changing $f$ out of the $N$  first iterates of the ``square'' which is $(1+\eta)$-times the smallest square containing both $p$ and $\tilde{p}$ with $\eta > 0$ very small. The proof of the Axiom Lift is related to the \emph{Selection Theorem} \cite[Th. (3.2)]{PR}.
\medbreak
The key to the proof of the $C^1$-Closing Lemma is the following technical result on linear algebra (on the tangent bundle). It combines two crucial tools namely the Selection theorem and the Fundamental lemma (\cite[Lemma (4.1)]{PR}).

\medbreak

\begin{lemma}\label{crucial}
Given $\epsilon>0$, $\eta>0$  and $K>1$  there is $N \in \mathbb{N}$  such that: for any set of linear maps $A_0, \ldots, A_{N-1}$ in $\SL(\mathbb{R}^2)$ satisfying $\|A_i ^{\pm1}\|<K $, there is an orthogonal basis $E=(e_1, e_2)$  of $\mathbb{R}^2$  such that, for every pair of points $a,b$ in the unitary square with respect to $E$ ($\mathcal{Q}=[-1, 1]_E^2$), there is a sequence $g_i$,  $i=0,\ldots,N-1$,  of area-preserving diffeomorphisms $g_i\colon \mathbb{R}^2_i\rightarrow \mathbb{R}^2_{i+1}$ with:
\begin{enumerate}
\item $\|g_i- A_i\|<\epsilon$;
\item $g_i$ is equal to $A_i$ out of the image of $A_{i-1} \circ A_{i-2} \circ \ldots \circ  A_0({[-1-\eta, 1+\eta]_E^2})$ and
\item $g_{n-1} \circ g_{n-2} \circ \ldots g_0(a)=A_{n-1} \circ A_{n-2} \circ \ldots A_0(b)$.
\end{enumerate}
\end{lemma}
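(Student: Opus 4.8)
The plan is to follow, essentially verbatim, the area-preserving part of Pugh and Robinson's treatment \cite[\S 3, \S 4, \S 8]{PR}; the crucial simplification here is that the whole statement lives on the tangent bundle, so no reversibility constraint is involved and one only has to make the constructions conservative. First I would transport the problem to a fixed reference frame: put $F_i:=A_{i-1}\circ\cdots\circ A_0$ (with $F_0=\mathrm{Id}$) and $P_i:=F_i(\mathcal{Q})$, a parallelogram of area $\mathrm{area}(\mathcal{Q})$ living in $\mathbb{R}^2_i$. Writing $g_i=A_i\circ h_i$, a sequence $(g_i)$ as in the statement corresponds exactly to a sequence of area-preserving diffeomorphisms $h_i\colon\mathbb{R}^2_i\to\mathbb{R}^2_i$ which are the identity off $(1+\eta)P_i$ and satisfy $\|Dh_i-\mathrm{Id}\|<\epsilon/K$ (using $\|A_i\|<K$). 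Conjugating, $\hat h_i:=F_i^{-1}\circ h_i\circ F_i$ is area-preserving, supported in $(1+\eta)\mathcal{Q}$, and (3) becomes $\hat h_{n-1}\circ\cdots\circ\hat h_0(a)=b$. So the task is: given $a,b\in\mathcal{Q}$, realize the displacement from $a$ to $b$ inside $(1+\eta)\mathcal{Q}$ as a composition of at most $N$ maps $\hat h_i$, each of which, read back in the $i$-th copy of $\mathbb{R}^2$, is area-preserving, supported in $(1+\eta)P_i$, and $C^1$-$(\epsilon/K)$-close to the identity.

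The core of the proof is the Selection Theorem \cite[Th.~(3.2)]{PR}, which I would invoke to produce the basis $E$ entering $\mathcal{Q}=[-1,1]_E^2$: its directions are dictated by the singular-direction behaviour of the cocycle $A_0,\dots,A_{N-1}$ and its scales by the perturbation budget, and it guarantees that along a suitably selected subset of the $N$ steps the parallelograms $P_i$ are fat enough in the relevant direction that a $C^1$-$(\epsilon/K)$-small, $(1+\eta)P_i$-supported, area-preserving push at that step moves the running iterate by a controlled amount in a controlled direction. Summing these contributions over the $N$ steps, the total pushing capacity dominates $\mathrm{diam}\,\mathcal{Q}$. Since $\|A_i^{\pm1}\|<K$ forces all the relevant constants, and hence the number of steps one needs, to depend only on $\epsilon,\eta,K$ (not on the particular $A_i$), one fixes $N=N(\epsilon,\eta,K)$ accordingly; here I would also use the Fundamental Lemma \cite[Lemma~(4.1)]{PR} to locate, inside the selected window, the configuration on which the elementary pushes can be performed without interfering with one another.

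It then remains to assemble the $g_i$. Starting from $a$, at each selected step $i$ I would compose with an explicit area-preserving shear or bump supported in $(1+\eta)P_i$ and $C^1$-$(\epsilon/K)$-small — the conservative perturbation of \cite[\S 8]{PR} — chosen to move the current iterate the prescribed small amount toward the corresponding iterate of $b$; on the remaining steps set $g_i=A_i$. Then (1) holds because $\|A_i\|<K$ and $\|Dh_i-\mathrm{Id}\|<\epsilon/K$; (2) holds by construction of the supports; and (3) is the statement that the accumulated displacement lands $g_{n-1}\circ\cdots\circ g_0(a)$ on $A_{n-1}\circ\cdots\circ A_0(b)$, which the capacity bound makes possible in $n\le N$ steps.

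The step I expect to be the real obstacle is the Selection Theorem itself: producing, uniformly over all cocycles of norm $<K$, a basis for which the distortion of the iterated squares is controlled well enough to force a total pushing capacity exceeding the diameter. This is precisely where the ingenuity of \cite[\S 3]{PR} lies; the only thing I have to verify is that none of it touches the diffeomorphism $f$ — it is pure linear algebra along the tangent bundle — so the reversing symmetry plays no role and the Pugh–Robinson argument transfers with no change.
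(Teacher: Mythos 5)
Your proposal is correct and follows the same route as the paper, which itself gives no independent proof of Lemma~\ref{crucial} but presents it as a repackaging of Pugh--Robinson's Selection Theorem \cite[Th.~(3.2)]{PR} and Fundamental Lemma \cite[Lemma~(4.1)]{PR} together with the conservative local perturbations of \cite[\S 8]{PR}. Your reduction to perturbations $h_i$ that are $C^1$-small \emph{in the $i$-th frame} and your identification of the Selection Theorem as the step controlling the distortion of the iterated squares match the paper's (implicit) argument, and you correctly observe that the statement is purely linear-algebraic on the tangent bundle, so reversibility does not intervene.
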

\medbreak
\medbreak

Before we continue the proof, it is worth to interpret the meaning of the previous powerful result. Condition (2) means that, up to a dimensional constant, we can specify the proportions of the square $\mathcal{Q}$ with respect to the basis $E$ inside which a ``gradual'' perturbation occurs. 
\medbreak

The performed perturbations occur in the tangent bundle. To ``go down'' to the manifold $M$, we need to realize the perturbation on $M$. This is the goal of the Lift Axiom Lemma (\cite[Def. pp. 265]{PR}), which ensures that for each $ f\in\text{Diff}^{~1}_\mu (M)$ and each $C^1$ neighbourhood $\mathcal{N}$
of $f$, there exists a uniform $\sigma > 0$ such that for all $p \in M$, $v \in T_p M$  with $||v||_E=1$, we have an area-preserving perturbation 
$h$ of the identity satisfying the following properties:
\begin{itemize}
\item $h \circ f \in \mathcal{N}$;
\item $h(p)=exp_p(\sigma v): T_p M \rightarrow M$ and 
\item the topological closure of the set where $h$ differs from the
identity is contained in the ball centered on $p$ and radius $||v||$. 
\end{itemize}
This means that the perturbation $h$ can lift points $p$ in a prescribed direction $v$ with results $\sigma$-proportional to the support. 
  
\begin{figure}\begin{center}
\includegraphics[height=15cm]{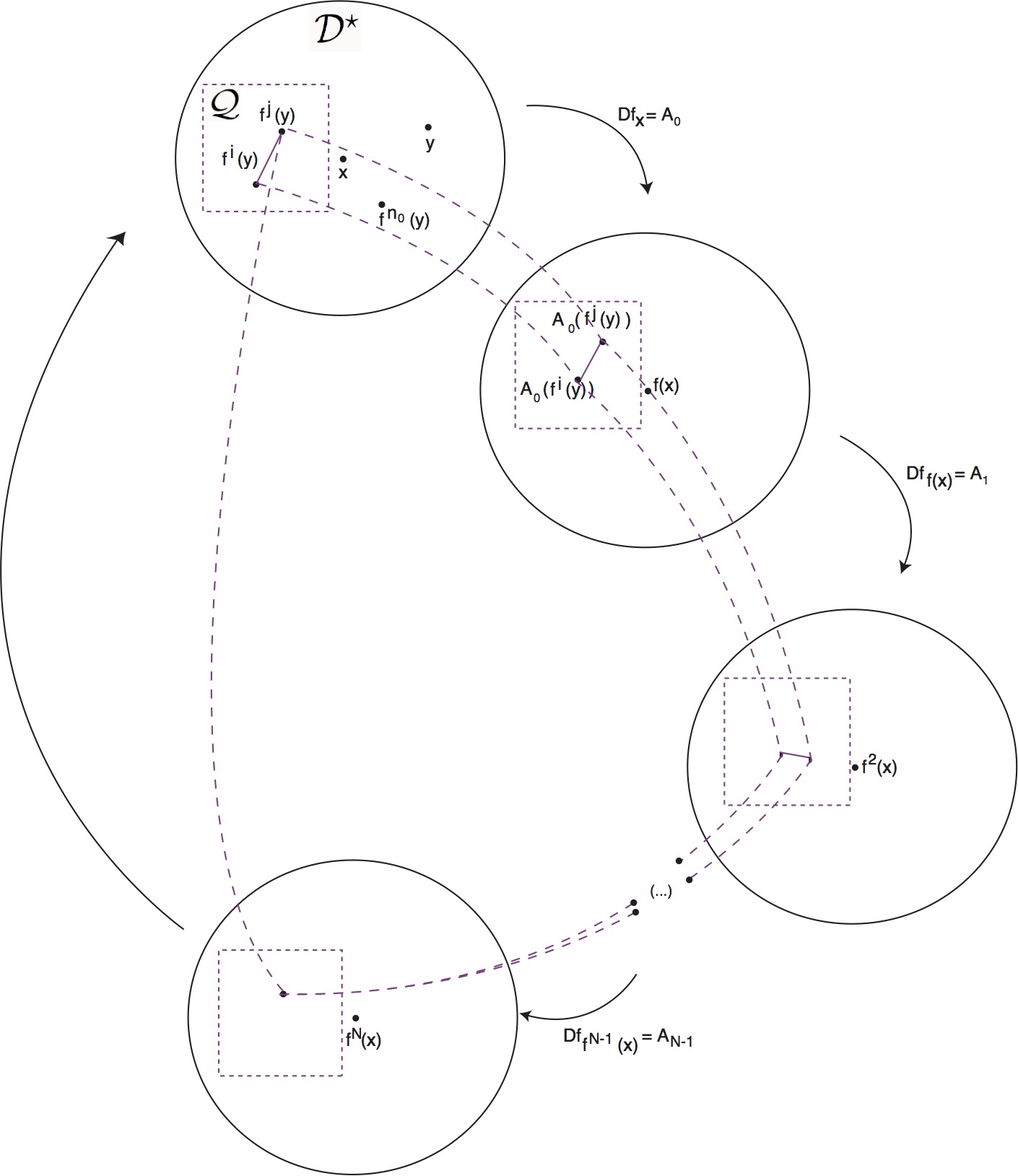}
\end{center}

\caption{There is a finite set of points $y_i$, $i \in \Lambda$, in the disk $\mathscr{D}$.  \emph{Fundamental Lemma} shows that there exists two of these points $f^i(y)$ and $f^j(y)$, $i<j<n_0$, and a square $\mathcal{Q}$  containing them such that the homothetic square $(1+\eta)\mathcal{Q}$ does not contain any other point $f^k(y)$ where $k \in \{1, \ldots,n_0\}\backslash \{i,j\}$.}
\label{Closing Lemma 1}
\end{figure}

Let us fix the size $\epsilon>0$ of the perturbations we allow and pick some small numbers $\eta>0$ and $K>1$. Remark~\ref{local2} allows to fix, once and for all, an integer $N$ such that:
$$
N>\frac{40\mathpzc{B}}{\sigma},
$$
where $\mathpzc{B}$ is a uniform bound for the global distortion along $Df_{f^i(p)}$, $i=1, \ldots, N$ where, for all $i \in \{1,\ldots, N\}$ we have $\|Df_{f^i(p)}^{\pm1}\|<K$; the constant $\mathpzc{B}\geq 1$ is what the authors of \cite{PR} call the \emph{altitude bolicities} of $Df_{f^i(p)}$ for $i=0, \ldots, N-1$.
\medbreak

Let $x \in \Omega\backslash Per(f)$ be a non-wandering point with respect to $f$. By definition of non-wandering point, there exists $y\in M$ arbitrarily close to $x \in \Omega$, and $n_0 \in \mathbb{N}$ such that
$n_0>N$ and the iterate $f^{n_0}(y)$ is arbitrarily close to $x$. The points $y$  and $f^{n_0} (y)$ are so close to $x$ that we may assume that the restrictions of $f^i$ to $\mathcal{D}^\star=D(x,r_1)$ is governed by the linear map $Df$ along the $f$-iterates of $\mathscr{D}$ (if necessary, shrink the neighbourhood of $x$), obeying the hypothesis of Lemma \ref{local}.
\medbreak
Lemma~\ref{crucial} provides local coordinates on the disk $\mathcal{D}^\star$ given by the basis $E$. Now, we concentrate our attention at the set $\Upsilon=\{0,\dots,n\}$  of all the return times of $y$  in the disk $\mathcal{D}^\star$:
$$
i \in \Upsilon \Leftrightarrow f^i(y) \in \mathcal{D}^\star. 
$$
So we get a finite set of points $y_i$, $i \in \Upsilon$, in the disk $\mathcal{D}^\star$.  Lemma~\ref{crucial} shows that there exists two of these points $f^i(y)$ and $f^j(y)$, $i<j<n_0$, and a square $\mathcal{Q}$  containing them such that the homothetic square $(1+\eta)\mathcal{Q}$ does not contain any other point $f^k(y)$ where $k \in \{1, \ldots,n_0\}\backslash \{i,j\}$.  
\medbreak
Since the orbit is $(R,f)$-free,  it follows that $f(x)\neq R(x)$ and thus the hypothesis of Lemma \ref{local} holds. Using now  Lemmas \ref{crucial} and  \ref{local} (acting together) we may perform two balanced local perturbations in order to obtain a resultant map inside the class of $R$-reversible maps. More precisely, both results build a $\epsilon$-$C^1$-perturbation $$g:=g_{N-1}\circ\ldots\circ g_0$$ of $f$  which is equal to $f$ out of the $N$ first $f$-iterates of the square $(1+\eta)\mathcal{Q}$, and such that 
$$g (f^j(y)) =g(f^i(y)).$$ As $f$  has not been changed on $f^{N+i}(y)$ with $i\geq 0$,  one gets that $f^j(y)$ is a periodic orbit of $g$  of period $j-i$. A ``twin" perturbation is automatically constructed in the following sense: taking into account Lemma \ref{local}, we may define additional perturbations in $\bigcup_{i=0}^{N-1} R(f^i(\mathcal{Q}))$ and one obtains $\hat{g}\in\text{Diff}^{~1}_{\mu, R}(M)$ $\epsilon$-$C^1$-close to $g$ such that $ \hat{g}=g$ outside $\bigcup_{i=0}^{N-1} R(f^i(\mathcal{Q}))$ and $\hat{g}=R\circ g$ inside $\bigcup_{i=0}^{N-1} R(f^i(\mathcal{Q}))$. Please note that we did not overlap the two perturbations during both perturbations because we have started with a small neighbourhood $\mathcal{D}^\star$ of $x$ such that for all $i \in \{0,1, \ldots, N\}$, $f^i(\mathcal{D}^\star)$ does not intersect $R(x)$.

 \end{proof}
\medbreak
By the $C^1$-Closing Lemma, one knows that very non-wandering point can be made periodic by a small $C^1$-perturbation. This periodic point can be made hyperbolic or elliptic by a new perturbation, persisting under small perturbations.

\section{Proof of Proposition \ref{newhouse} and Theorems \ref{newhouse2} and \ref{SC} }
\label{mains}
The main goal of this section is the proof of Proposition \ref{newhouse} and Theorems \ref{newhouse2} and \ref{SC}. We first revisit the Newhouse proof ~\cite{N} that obtained a dense set of elliptic points via the existence of homoclinic tangencies of the invariant manifolds of periodic orbits. 

\subsection{Elliptic points from homoclinic tangencies: revisiting Newhouse's proof}
The approach followed by Newhouse in ~\cite{N} was to obtain elliptic points near homoclinic tangencies, when the stable and the unstable manifolds associated to a hyperbolic periodic orbit intersect in a non-transversal way.
\medbreak

 Let us see how the elliptic points are obtained: extending a result of Zehnder \cite{Z0}, in \cite{N}, it is first proved that if $f$ is non-Anosov, then homoclinic tangencies associated to hyperbolic periodic orbits are created for $g$ $C^1$-close to $f$ -- more details in \cite[pp.1078]{N}. The perturbations are not explicit. The homoclinic tangencies are created near the point where no transversality exists and they are done in such a way that the angle between the stable and the unstable direction of a periodic hyperbolic point has a homoclinic tangency. 
 \medbreak
 
 We point out that, in the dissipative case, an explicit relationship between homoclinic tangencies and the angle of the stable and unstable subspaces of periodic points has been given in \cite[Lemma 2.2.1]{PS}, which cannot be directly extended to our study. Secondly, in \cite[Lemma 4.1]{N}, the author shows that a symplectic diffeomorphism with a non-transverse homoclinic point (for some hyperbolic periodic orbit) can be perturbed to produce an elliptic periodic orbit nearby. For a small $\epsilon>0$ and $N \in \mathbb{N}$, the idea is to construct an isotopy $g_t$ ( $-\epsilon\leq t \leq \epsilon$) of perturbing maps in $\text{Diff}_\mu^1(M)$ such that:
 \begin{itemize}
 \item $g_0 = f$
 \item there is a point $s \in [-\epsilon,\epsilon]$ for which $g^N_{s}$ has a bifurcation (a fixed point appears).
 \end{itemize}
 It implies that $(Dg^N_{s})_p$ has 1 as an eigenvalue of multiplicity two (for the map $Dg^N$) and thus an elliptic periodic point. The combination of these two results constitutes the proof of the dichotomy. 
 
\subsection{Proof of Proposition~\ref{newhouse}} 
In this section, we put together the previous information about and we show Proposition~\ref{newhouse}. Let $f\in\text{Diff}^{~1}_{\mu, R}(M)$ be a map in the $C^1$-interior of the complement of the Anosov maps, $O\subset M$ a non-empty open set and $\epsilon>0$. We will show that there exists $g\in\text{Diff}^{~1}_{\mu, R}(M)$ such that $g$ is $\epsilon$-$C^1$-close to $f$ and $g$ has an elliptic periodic orbit through $O$. 
\medbreak
Before proceeding the proof, we need to recall a basic result on topological dimension. There are several different ways of defining the topological dimension of topological space $A$, which we shall denote by $\dim(A)$. For separable metrizable spaces, all these definitions are equivalent. The topological dimension is a topological invariant. For full details on this concept see ~\cite{HW}, where it is proved that: 

\begin{lemma}(Szpilrajn Theorem \cite{HW})\label{Sz}
Given $A\subset \mathbb{R}^2$. If $A$ has zero Lebesgue measure, then $\dim(A)<2$.
\end{lemma}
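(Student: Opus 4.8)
The plan is to prove, by induction on the ambient dimension, the slightly more flexible statement: if $B\subset\mathbb{R}^n$ has vanishing $n$-dimensional Hausdorff measure $\mathcal{H}^n$, then $\dim(B)\le n-1$. Lemma~\ref{Sz} is then the case $n=2$, since a subset of $\mathbb{R}^2$ has zero Lebesgue measure precisely when $\mathcal{H}^2(A)=0$. Throughout I use that on the separable metrizable space $B$ the covering dimension $\dim$ agrees with the inductive (Menger--Urysohn) dimension, so it suffices to exhibit, around each point of $B$, arbitrarily small relatively open neighbourhoods whose boundary in $B$ has dimension $\le n-2$.

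For the base case $n=1$: a subset of $\mathbb{R}$ of $\mathcal{H}^1$-measure zero contains no nondegenerate interval, hence is totally disconnected, hence has topological dimension $\le 0$ (with the convention $\dim\emptyset=-1$). For the inductive step, fix $x\in B$ and $\delta>0$, and let $S_r$ be the Euclidean sphere of radius $r$ centred at $x$. Fubini's theorem in polar coordinates (equivalently, the coarea formula for the distance function $y\mapsto|y-x|$, whose gradient has unit norm a.e.) gives $\mathcal{H}^n(B)=\int_0^\infty \mathcal{H}^{n-1}(B\cap S_r)\,dr$, so $\mathcal{H}^{n-1}(B\cap S_r)=0$ for Lebesgue-almost every $r>0$; in particular such an $r$ may be chosen inside $(0,\delta)$. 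Covering $S_r$ by finitely many bi-Lipschitz charts onto open subsets of $\mathbb{R}^{n-1}$ --- a correspondence preserving both $\mathcal{H}^{n-1}$-null sets and topological dimension --- the inductive hypothesis yields $\dim(B\cap S_r)\le n-2$. Since the boundary in $B$ of the relatively open set $B\cap B(x,r)$ is contained in $B\cap S_r$, these sets form, as $r$ ranges over the admissible values below $\delta$, a neighbourhood basis of $x$ in $B$ with boundaries of dimension $\le n-2$. As $x\in B$ was arbitrary, $\dim(B)\le n-1$, closing the induction and giving the lemma for $n=2$.

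The only genuinely delicate points are bookkeeping rather than substance: invoking the equivalence of inductive and covering dimension on separable metrizable spaces, the polar-coordinate slicing, and the bi-Lipschitz invariance of both Hausdorff-nullity and topological dimension so that the spheres $S_r$ can be replaced by Euclidean charts. All of these are classical and contained in \cite{HW}; there is no essential obstacle, the argument being Szpilrajn's specialised to the plane.
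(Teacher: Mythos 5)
The paper does not actually prove this lemma; it is quoted as a known result with a pointer to Hurewicz--Wallman, so there is nothing internal to compare against. Your argument is correct and is essentially the classical proof of Szpilrajn's theorem (the induction on $n$ via slicing by spheres and the coincidence of covering and inductive dimension for separable metric spaces is exactly the route taken in \cite{HW} and in Federer-style treatments), so you have supplied the proof the paper outsources. Two small points of bookkeeping deserve a word. First, in the base case, ``totally disconnected $\Rightarrow$ $\dim\le 0$'' is false for general separable metric spaces (Erd\H{o}s space is a counterexample); it is true here because for a subset $A\subset\mathbb{R}$ with dense complement one gets an explicit clopen basis $A\cap(a,b)$ with $a,b\notin A$, so you should argue via the dense complement rather than via total disconnectedness. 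Second, passing from the finitely many bi-Lipschitz charts on $S_r$ back to $\dim(B\cap S_r)\le n-2$ uses the (countable) sum theorem for a closed cover refining the charts, and the polar-coordinate identity should be invoked as the Eilenberg/coarea \emph{inequality} so that it applies to an arbitrary null set (or to a measurable $G_\delta$ hull of $B$). Both are standard and do not affect the validity of the proof.
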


In order to show Proposition~\ref{newhouse}, we start by proving the following result:

\begin{theorem}\label{empty}
Given $f\in \text{Diff}^{~1}_{\mu, R}(M)$  and $\Lambda_f\subseteq M$ a uniformly hyperbolic set, then either $\Lambda_f=M$ (Anosov) or else $\Lambda_f$ has empty interior.
\end{theorem}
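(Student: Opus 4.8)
The plan is to argue by contradiction: suppose $\Lambda_f \subseteq M$ is a uniformly hyperbolic set with non-empty interior, and show that this forces $\Lambda_f = M$, so that $f$ is Anosov. The starting point is that on a uniformly hyperbolic set the stable and unstable subbundles $E^s$ and $E^u$ are continuous and, because $M$ is a surface, one-dimensional. The key geometric input is the local product structure / local stable and unstable manifold theorem: at each point $x$ of a hyperbolic set there exist local invariant manifolds $W^s_{loc}(x)$ and $W^u_{loc}(x)$ tangent to $E^s_x$ and $E^u_x$, varying continuously with $x$, with uniform size. Thus, starting from an open set $U \subseteq \inter(\Lambda_f)$, one obtains a ``hyperbolic box'': a neighbourhood foliated by local stable and local unstable leaves through points of $\Lambda_f$.

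First I would show that $\inter(\Lambda_f)$ is $f$-invariant. Indeed $\Lambda_f$ is an $f$-invariant closed set (it is a hyperbolic set, hence closed, and uniform hyperbolicity on it is $Df$-invariant by definition), so $f(\inter\Lambda_f)$ is an open subset of $\Lambda_f$, hence contained in $\inter\Lambda_f$; applying the same to $f^{-1}$ gives $f(\inter\Lambda_f)=\inter\Lambda_f$. Next, the crucial step is \emph{saturation}: if $x \in \inter(\Lambda_f)$, then the local unstable manifold $W^u_{loc}(x)$ is contained in $\Lambda_f$, and in fact in $\inter(\Lambda_f)$. The inclusion $W^u_{loc}(x)\subseteq\Lambda_f$ follows because points on $W^u_{loc}(x)$ have backward orbits converging to the orbit of $x$, so their forward and backward iterates stay in a compact neighbourhood on which the hyperbolic estimates persist; more cleanly, one uses that through the open set $U$ there pass full-length local unstable leaves of nearby points of $\Lambda_f$, and the union of these leaves is open (an unstable leaf through an interior point, thickened by nearby stable leaves, is a neighbourhood) and is contained in $\Lambda_f$, hence in $\inter\Lambda_f$; the same holds for stable leaves. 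Combining the two, $\inter(\Lambda_f)$ is saturated by both local stable and local unstable manifolds of its points, i.e. it is open, $f$-invariant, and locally maximal with full local product structure.

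Finally I would use area-preservation together with connectedness of $M$ to conclude. Set $V=\inter(\Lambda_f)$: it is open, non-empty, $f$-invariant. I claim it is also closed, whence $V=M$ by connectedness. Take $z\in\partial V$. Since $\Lambda_f$ is closed, $z\in\Lambda_f$, so $z$ has local stable and unstable manifolds of uniform size $\delta>0$; by the saturation argument above, the $\delta$-box around any point of $V$ sufficiently close to $z$ lies in $V$, and as $z$ is a limit of such points this box eventually contains $z$ in its interior, so $z\in V$ --- contradicting $z\in\partial V$. Hence $\partial V=\emptyset$, $V$ is open and closed, and by connectedness $V=M$; thus $M=\inter(\Lambda_f)\subseteq\Lambda_f\subseteq M$, the splitting $E^s\oplus E^u$ is defined and hyperbolic on all of $M$, and $f$ is Anosov. (Area-preservation is used implicitly only to guarantee that the hyperbolic splitting has the correct dimensions $1+1$ and that no collapsing occurs; alternatively one may invoke that on a surface a uniformly hyperbolic set with non-empty interior and local product structure is a transitive attractor-repeller, which on a closed surface must be everything.) The main obstacle I expect is the \emph{saturation step} --- proving rigorously that $W^{u}_{loc}(x)\subseteq\Lambda_f$ for $x\in\inter(\Lambda_f)$ and that the resulting union of leaves is open; this requires the uniform size and continuity of local invariant manifolds on hyperbolic sets and a careful shadowing/compactness argument, and is the heart of why ``interior $\Rightarrow$ everything''.
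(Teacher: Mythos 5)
Your strategy is genuinely different from the paper's, and it has a gap at exactly the point you yourself flag as the heart of the matter: the \emph{saturation step}, i.e.\ the claim that $x$ in the interior of $\Lambda_f$ implies $W^u_{loc}(x)\subseteq\Lambda_f$. A uniformly hyperbolic set is merely a compact invariant set carrying a hyperbolic splitting; it need not contain the local invariant manifolds of its points (that containment is essentially the definition of local maximality, which is not assumed here). Your first justification --- that points of $W^u_{loc}(x)$ have orbits remaining in a compact neighbourhood where the hyperbolic estimates persist --- does not place those points in $\Lambda_f$: having an orbit that stays near $\Lambda_f$ in a region of persistent hyperbolicity characterizes membership in the \emph{maximal invariant set} of that neighbourhood, not in $\Lambda_f$ itself. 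Your second justification is circular: the union of the local unstable leaves through points of the open set $U$ is indeed (essentially) open, but its inclusion in $\Lambda_f$ is precisely the saturation claim being proved. The gap is not cosmetic: the purely topological statement ``a hyperbolic set with nonempty interior is the whole manifold'' is known to be \emph{false} for general diffeomorphisms --- Fisher (\emph{Hyperbolic sets with nonempty interior}, 2006) proves it only under transitivity, or under local maximality on surfaces, and constructs non-Anosov examples with hyperbolic sets having nonempty interior. Since your argument invokes neither transitivity nor local maximality, and uses area-preservation only ``implicitly'' for dimension counting, it cannot be correct as written.

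The paper's proof uses the conservative structure exactly where your argument is silent. It passes to a hyperbolic continuation for a $C^\infty$ area-preserving diffeomorphism obtained from Zehnder's smoothing theorem, then invokes the Bochi--Viana alternative: a uniformly hyperbolic set of a sufficiently smooth area-preserving surface diffeomorphism has either zero Lebesgue measure or equals $M$. Zero Lebesgue measure forces topological dimension $<2$ by Szpilrajn's theorem, contradicting the assumption that the set contains a nonempty open subset. If you wish to retain a geometric approach, you would need to inject conservativity explicitly --- e.g.\ via Poincar\'e recurrence of interior points --- and then carry out a genuine proof that recurrence forces the leaves to be absorbed into $\Lambda_f$; that is essentially the content of the transitive case of Fisher's theorem and is considerably more delicate than the sketch you give.
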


\begin{proof} 
Let $f\in \text{Diff}^{~1}_{\mu, R}(M)$, $\Lambda_f\subset M$ be a uniformly hyperbolic set and $\Lambda \neq M$ (\emph{i.e.} $f$ is not Anosov). 
We want to prove that $\Lambda_f$ has empty interior. We may assume that $f$ is not on the boundary of Anosov reversible maps on $M$. 

By contradiction, suppose that $\Lambda_f$ has positive interior. This means that there is an open set $\emptyset \neq U_f\subset M$ such that $U_f \subset \Lambda$. Any open set has the topological dimension of the ambient space, thus $\dim(\Lambda_f)=2$. 

Hyperbolic sets have a hyperbolic continuation by any small $C^1$ perturbation of $f$, which is also hyperbolic. Denote by $g$ any small $C^1$ perturbation  of $f$ and $\Lambda_g$ the hyperbolic continuation of $\Lambda_f$ (by $g$). Since the conjugacy maps non-empty open sets into non-empty open sets, then 
the set $\Lambda_g$ should contain a non-empty open set and therefore $\dim(\Lambda_g)=2$. 

By Zehnder \cite[Section 2]{Z} we may smoothing out the map $f$ \emph{i.e.} there exists $g^\star\in\text{Diff}^{~\infty}_{\mu}(M)$ $C^1$-close to $f$.  Notice that there is no need $g^\star$ being $R$-reversible. If $\Lambda_{g^\star}$ is the hyperbolic continuation of $\Lambda_f$, then it is clear that $\dim(\Lambda_{g^\star})=2$. 
Using now \cite[Appendix B]{BV}, either $\mu(\Lambda_{g^\star})=0$ or $\Lambda_{g^\star}=M$. The second case does not hold because $f$ is far from the Anosov maps. Therefore, $\mu(\Lambda_{g^\star})=0$. By Lemma~\ref{Sz} we conclude that $\dim(\Lambda_{g^\star})<2$ which is a contradiction because it should contain an open set.
 
\end{proof}

\begin{proof}(of Proposition~\ref{newhouse})
Let $f\in \text{Diff}^{~1}_{\mu, R}(M)$, let $f$ be $C^1$-far from the Anosov maps and let $O$ be a non-empty open set of $M$ . Through a $C^1$-small perturbation we may assume that 
$f\in \mathscr{Q}$ and $f$ is not Anosov, where $\mathscr{Q}$ is the $C^1$-residual set given in Lemma~\ref{good}. Now, two situations should be taken in consideration:
\begin{enumerate}
\item if some parabolic periodic orbit of $f$ goes through $O$ we are over by just using Lemma~\ref{Franks} to turn it into an elliptic periodic point;
\item otherwise, if a dense subset of $m$-uniformly hyperbolic periodic orbits of $f$ go through $O$, then its closure defines a hyperbolic set $\Lambda\supset O$. Since $f$ is $C^1$-far from the Anosov maps, by Theorem \ref{empty}, we get that $\Lambda$ has empty interior, which is a contradiction.
\end{enumerate}
\end{proof}

\subsection{Proof of Theorem~\ref{newhouse2}}
In order to prove Theorem~\ref{newhouse2}, we strongly make use of Proposition~\ref{newhouse}.
We are going to exhibit a residual set $\mathscr{R}_1$ such that $ \mathscr{R}=\mathscr{A}\cup \mathscr{R}_1$ is a residual set of $\text{Diff}^{~1}_{\mu, R}(M)$ for which the dichotomy of Theorem~\ref{newhouse2} holds. Denote by $$\mathscr{P}=\text{Diff}^{~1}_{\mu, R}(M)\setminus \overline{\mathscr{A}},$$  the $C^1$-open set defined by the complement of the $C^1$-closure of the set of Anosov maps $\mathscr{A}$ in $\text{Diff}^{~1}_{\mu, R}(M)$, where $\overline{A}$ denotes the $C^1$-closure of the set $A$.

  Let $\Phi$ stands for the subset of $\text{Diff}^{~1}_{\mu, R}(M) \times M \times \mathbb{R}^+$ such that $(f, x, \varepsilon) \in \Phi$ if and only if  $f$ has an elliptic periodic orbit  intersecting the open ball $B(x, \varepsilon)$. Observe that if $\mathcal{U}\subset \mathscr{P}$ is an open set, then $\Phi(\mathcal{U}, x, \varepsilon)$ defined by:
$$\Phi(\mathcal{U}, x, \varepsilon):=\{g\in \mathcal{U}\colon (g,x,\epsilon)\in \Phi\}$$  
  is an open set for the product topology of $\text{Diff}^{~1}_{\mu, R}(M) \times M \times \mathbb{R}^+$.

Let $(x_n)_{n \in \mathbb{N}}$ be a dense sequence in $M$ (it exists because $M$ is a compact set) and $(\varepsilon_n)_{n \in \mathbb{N}}$ a sequence of positive real numbers converging to zero.  Now, for $n \in \mathbb{N}$, define $U_1=\mathscr{P}$ and $U_{n+1}=\Phi(U_n, x_n, \varepsilon_n)$. The set $\mathscr{R}_1=\bigcap_{n \in \mathbb{N}}U_n$ is the countable union of open sets and if $f \in \mathscr{R}_1$ then the elliptic periodic orbits of $f$ are dense in $M$. 

\subsection{Proof of Theorem~\ref{SC}}
In order to prove Theorem~\ref{SC}, we use Theorem~\ref{newhouse2}.

($\Leftarrow$) If $f$ is Anosov, it is known that $f$ is $C^1$-structurally stable. ($\Rightarrow$) Suppose, by contradiction, that is $C^1$-structurally stable and non-Anosov. 
By Theorem~\ref{newhouse2} there exists a $C^1$-residual set $\mathscr{R}\subset\text{Diff}^{~1}_{\mu, R}(M)$ such that any $f\in \text{Diff}^{~1}_{\mu,R}(M)$, elliptic periodic orbits of $f$ are dense in $M$. Bearing in mind that an elliptic periodic point $x_0$ on the plane is conjugated to a  rotation around an open neighbourhood $V$ of $x_0$, by the Pasting Lemma proved in \cite[Theorem 3.6]{ArMa}, for any $\varepsilon>0$ we may construct two perturbations of $f$, say $g_1$ and $g_2$, inside $\text{Diff}^{~1}_{\mu, R}(M)$, which are in the $C^1$-domain of topological conjugacy of $f$ (note that we are assuming that $f$ is $C^1$-structurally stable). Therefore, there exist homeomorphisms $h_1,h_2\colon M\rightarrow M$ such that 
$$h_1\circ g_1=f\circ h_1\qquad \text{and} \qquad  h_2\circ g_2=f\circ h_2.$$ Those perturbations can be made such that $g_1$ is a rotation of rational angle centered at $x_0$ and $g_2$ is a rotation of irrational angle centered at $x_0$. Then, there exists $h:=h_1^{-1}\circ h_2$ conjugating $g_1$ and $g_2$, i.e., 
$$h\circ g_2= h_1^{-1}\circ h_2 \circ g_2= h_1^{-1}\circ f \circ h_2= g_1\circ h_1^{-1} \circ h_2=g_1\circ h,$$
which is a contradiction.

\section*{Acknowledgements} 
 MB was partially supported by National Funds through FCT - ``Funda\c{c}\~{a}o para a Ci\^{e}ncia e a Tecnologia", project PEst-OE/MAT/UI0212/2011. CMUP is supported by the European Regional Development Fund through the programme COMPETE
and by the Portuguese Government through the Funda\c{c}\~ao para a Ci\^encia e a Tecnologia (FCT) under the
project PEst-C/MAT/UI0144/2011. AR was supported by the grant SFRH/BPD/84709/2012 of FCT.

\end{document}